\newcommand{\TheTitle}{HIGH ORDER MOMENT MODEL FOR  POLYDISPERSE EVAPORATING SPRAYS TOWARDS INTERFACIAL GEOMETRY DESCRIPTION}
\newcommand{\TheShortTitle}{High order geometrical moment model for polydisperse sprays} 
\newcommand{\TheAuthors}{M. Essadki, S. de Chaisemartin, F. Laurent and  M. Massot}
\headers{\TheShortTitle}{\TheAuthors}
\title{{\TheTitle}}
\author{
  Mohamed Essadki\thanks{IFP Energies nouvelles, 1-4 avenue de Bois-Pr\'eau, 92852 Rueil-Malmaison Cedex  and Laboratoire EM2C, CNRS, CentraleSup\'elec, Universit\'e Paris-Saclay, Grande Voie des Vignes, 92295 Ch\^atenay-Malabry, Cedex - France    (\email{mohamed.essadki@centralesupelec.fr}).}
  \and
  Stephane de Chaisemartin\thanks{IFP Energies nouvelles, 1-4 avenue de Bois-Pr\'eau, 92852 Rueil-Malmaison Cedex - France
    (\email{stephane.de-chaisemartin@ifpen.fr}).}
  \and
  Fr\'ed\'erique Laurent
  \and
  Marc Massot\thanks{Laboratoire EM2C, CNRS, CentraleSup\'elec,Universit\'e Paris-Saclay, Grande Voie des Vignes, 92295 Ch\^atenay-Malabry, Cedex - France and F\'ed\'eration de Math\'ematiques de l'Ecole Centrale Paris, FR CNRS 3487
    (\email{frederique.laurent@centralesupelec.fr - marc.massot@centralesupelec.fr}).}
}
\newcommand{\tens}[1]{{\boldsymbol{#1}}}
\newcommand{\Ug}{\tens{U}_g}
\def\xv{\tens{x}}
\newcommand{\Momsp}[1]{\mathbb{M}_{#1}^{1/2}}
\newcommand{\nq}{\mathit{n}_q}
\newcommand{\negat}{\mathit{n}_q^-}
\newcommand{\U}{\tens{U}}
\newcommand{\cs}{c}
\newcommand{\C}{\tens{\cs}}
\newcommand{\Size}{\phi}
\newcommand{\size}{S}
\newcommand{\radius}{R}
\newcommand{\vecVar}[1]{\tens{#1}}
\newcommand{\nME}{n^{ME}}
\newcommand{\lambEM}{\vecVar{\lambda}^{ME}}
\newcommand{\Hcurv}{\mathrm{H}}
\newcommand{\Gcurv}{\mathrm{G}}
\newcommand{\Langsize}{\widetilde{\size}}
\newcommand{\deltat}{\Delta t}
\newcommand{\Deltax}{\Delta x}
\newcommand{\lambdaVec}{\tens{\lambda}}
\newcommand{\NEMO}{\textbf{NEMO}}
\newcommand{\f}{f}
\newcommand{\T}{T}
\newcommand{\mom}{\mathrm{m}}
\newcommand{\cmom}{\mathrm{c}}
\newcommand{\Mom}{\tens{\mom}}
\newcommand{\CMom}{\tens{\cmom}}
\newcommand{\ns}{n}
\newcommand{\Alphas}{\alpha}
\newcommand{\Sigmas}{\Sigma}
\newcommand{\Alphad}{\alpha_d}
\newcommand{\Sigmad}{\Sigma_d}
\newcommand{\curv}{\mathrm{k}}
\newcommand{\Pset}{\mathrm{P}}
\newcommand{\Sd}{S}
\newcommand{\ndf}{n}
\newcommand{\Deltat}{\Delta t}
\newcommand{\ds}{\displaystyle}
\newcommand{\pf}{\texttt{p4est}}
\newcommand{\canop}{\texttt{CanoP}}
\newcommand{\Stokes}{\mathrm{St}}
\newcommand{\EvapRate}{\mathrm{R}_S}
\newcommand{\KEvap}{\mathrm{K}}
\begin{document}
\maketitle

\begin{abstract}
In this paper we propose a new Eulerian model and related accurate and robust numerical methods, describing polydisperse evaporating sprays, based on high order moment methods in size. The main novelty 
of this model relies on the use of fractional droplet surface moments and their ability to predict some geometrical variables of the droplet-gas interface, by analogy with the liquid-gas interface in interfacial flows. 
Evaporation is evaluated by using a Maximum Entropy (ME) reconstruction. 
The use of fractional moments  introduces some theoretical and numerical difficulties. First, relying on a study of the moment space, we extend the ME reconstruction to the case of fractional moments. 
Then, we propose a new high order and robust algorithm to solve the moment evolution due to evaporation, which preserves the structure of the moment space. 
It involves some negative order fractional moments for which a novel treatment is introduced.
The present model and numerical schemes yield an accurate and stable evaluation
of the moment dynamics with minimal number of variables, as well as computational cost, 
but also provides an additional capacity of coupling with diffuse interface model and transport equation of averaged geometrical interface variables, 
which are essential in order to describe atomization. 
\end{abstract}

\begin{keywords}
  High order moment method, moment space, realizable high order numerical scheme, polydisperse spray, evaporation, entropy maximization, interface geometry.
\end{keywords}

\begin{AMS}
 76T10, 35Q35, 65M08, 65M12, 65M99, 65D99.
\end{AMS}


\section{Introduction}

Modeling ans simulating the fuel injection
in automotive engines and particularly Diesel engine, where the fuel is stored as a liquid phase and injected at high pressure in the 
combustion chamber,  faces major challenges, because of the strongly multi-scale 
character of this two-phase flow problem. It is however essential, since it has a direct impact on the combustion regime and pollutant emissions
and because of the difficulties of getting accurate measurements inside an engine and the high cost 
of experiments \cite{ECN}.

The "Direct Numerical Simulations"  approach 
aims at resolving the mono\-phasic Navier-Stokes equation as well as the interface dynamics and the whole range of scales 
and relies on several classical methods ranging from Volume of Fluid to level set methods and combinations of both,
thus constituting a whole branch of the two-phase literature (see  \cite{vaudor2017,arienti2014} and references therein). However, 
it has a hard time in realistic configurations of high Reynolds and Weber numbers, where it cannot resolve the scale spectrum of both the 
separated-phases two-phase flow close to the injector and the disperse phase,  after atomization leading to a polydisperse 
evaporating spray in the downstream region, where evaporation and combustion are taking place.
Therefore, reduced-order models   
have to be considered \cite{fx2016}, since a full resolution is out of reach for both academic and industrial applications.

Two reduced-order model classes have been designed depending on the flow region. In the 
dense  core region, diffuse interface models (see \cite{drui_PhD,letouze2015,saurel2018}
for a literature review) can be used to simulate interfacial flows with 
lower computational cost, even if the artificial fluid mixing and mesh size lead to some level of diffusion of the interface, and 
we lose important information about the interface geometry. To gain more precision in capturing atomization, 
some models add a transport equation of the interface area density \cite{vallet2001,jay06,lebas2009,devassy2015}, which is not 
not sufficient to describe a key property for applications: the polydispersion in the disperse phase region.

The second reduced-order model class relies on a kinetic approach, which has been widely used to describe accurately a population of particle at mesoscopic level. In this approach, a cloud of droplets is modeled by a number density function (NDF), which 
satisfies a Williams-Boltzmann Equation \cite{williams1958}. 
The internal variables of the NDF provide a statistical description of some relevant physical properties such as the droplet size, velocity and 
temperature. In the following, we focus on size and velocity but temperature can easily be introduced. 
The numerical resolution of the Williams-Boltzmann Equation can be achieved by the Lagrangian Monte-Carlo 
approach. This method is considered to be the most accurate for solving this equation, but 
leads to a high computational cost for unsteady flows and requires complex load-balancing and coupling algorithms in parallel computations. 
An alternative method consists in deriving an Eulerian moment model from the Williams-Boltzmann Equation. In this approach, 
a differential system of a finite set of moments of the NDF is closed by expressing the velocity NDF conditioned in size as a function of the known velocity moments  \cite{kah12,levermore95,levermore96,grad58,sabat2016}. 
For the modeling of the size distribution,  three possible approaches can be used: 1- Multi-fluid models also called sectional methods (see \cite{vieJCP2012,laurent2015} and references therein), where the size phase space is discretized 
into intervals called sections using the conservation of size moments up to two. 
2- Method of moments with interpolative closure \cite{Frenklach2002}, essentially used for soot modeling and simulation, provides a closure of negative as well as fractional moments from the integer ones through a logarithm Lagrangian interpolation. However, such an approach suffers from several issues such as handling multi-modal distributions, preserving the moment space \cite{Mueller2009}, as well as 
 important inaccuracy due to the way the negative order moments are approximated.
3- The third approach involves high order moments on the whole size range, which will be a compact interval in the present study, as well as a procedure in order to reconstruct the NDF. It allows to limit the computational cost and the number of systems of partial differential equations to be solved. We will not consider in the present study quadrature methods such as QMOM or EQMOM \cite{mcgraw1997,marchisio2005,laurent-12,pollack2016} since they lead to either difficulties in evaluating the evaporating flux at zero size or can not reach the integrality of the moment space, but rather focus on Maximum Entropy (ME) reconstruction such as with Eulerian Multi Size Moment (EMSM) and Correlated Size Velocity Moment models in \cite{kah12,massot2010,vieJCP2012,gumprich2016}.
They cover the whole moment space and show  a great capacity in modeling the polydispersion and  evaporation with 
a minimal number of variables. 

So far, the existing models do not provide a unified description of the two regions.
In the present contribution, we propose a new high order moment model for the disperse phase with the capacity of describing the interface geometry statistics \cite{Drew_Geom}. 
Our strategy is to resolve the polydispersion by using a set of variables, which can be identified as averages of the gas-liquid interface geometry. 
We show that some geometrical variables can be expressed as fractional surface-moments of NDF in the disperse zone.
We introduce the mathematics fundamentals of the model and show that we can preserve the advantages of  previous methods \cite{kah12,massot2010} in terms of both accuracy, realizability and robustness, and low computational cost but with a much higher potential in terms of coupling with a diffuse interface model. 
Evaporation flux are evaluated using ME reconstruction based on  fractional moments\footnote{Entropy maximization with fractional moments is used in    \cite{Novi2003,Gzyl2010,Tagliani2011} as a remedy to ill-conditioning  related to a high number of integer moments \cite{Talenti1987}.
The considered set of fractional moments is then recovered from the integer ones, and their orders are optimized to minimize the entropy difference with the real function.
In our contribution, a known and small set of fractional moments is used, deduced from physical considerations, so that the problematic is different.}.
The existence and uniqueness of this convex optimization problem under constraints is given in \cite{mead84} in the case of integer moment. While some elements of proof are to be found in the 
fractional moment case in \cite{Kapur1992},
we propose a generalization of the result in the case of a special set of fractional moments. 
Moreover, we generalize useful properties of the fractional moment space such as canonical moments as well as lower principal representation and quadrature \cite{dette97,dette2002,lasserre2010}. 
These properties are relevant ingredients to design high order realizable schemes and 
algorithms to solve a high order moment system. A new realizable algorithm to solve the evolution of fractional moments due to the evaporation is proposed, which involves negative order of moments and requires an original strategy compared to the integer moment problem. 
The accuracy and robustness of the proposed strategy is then assessed by a careful investigation of the numerical errors as well as a detailed comparison with the original approach in 0D, 1D and 2D academic configurations.
A companion paper  \cite{OGST} aims at implementing the proposed model and numerical strategy into a massively parallel code with adaptive mesh refinement and showing its potential towards realistic engine simulations.
 
The paper is organized as follows. First, the two-phase flow modeling of polydisperse evaporating sprays in a carrier gaseous flow field as well as the original high order moment modeling are introduced; we also recall the classical averaged geometrical description of interfacial flows in order to identify the relevant geometrical averaged variables. Section 3 is devoted to the introduction of the new geometrical high order moments for a polydisperse spray as well as the resulting system of partial differential equations on moments derived from the Williams-Boltzmann Equation. The closure of the system through ME is then presented and its mathematical properties detailed. Once the proposed system is closed, Section 4 and 5 are dedicated to its numerical resolution. While Section 4 is devoted to the transport in phase space, Section 5 focuses on the transport in physical space. Section 6 is eventually concerned with the numerical verification and results in 0D, 1D and 2D, thus assessing the proposed modeling and numerical strategy, before concluding.


\section{Two phase flows modeling} 


\subsection{Kinetic modeling of polydisperse spray and semi-kinetic model}
\label{sec-2}
The spray consists in a cloud of polydisperse droplets, which can be described statistically with the number density function (NDF) $\f^{\Size}(t,\xv,\C,\Size,\T)$. This function represents the probable number of droplets located at position $\xv$, travelling with velocity $\C$ and having temperature $\T$ and  size $\Size$. In general, the size $\Size$ of a spherical droplet can be given by its volume $V$, its surface $\size$ or its radius $\radius$ of the droplet. By considering a spherical form, these three geometrical variables are equivalent $\f^{V}dV=\f^{\size}d\size=\f^{\radius}d\radius$. In the following, we use the surface $\size$ as the size variable. The NDF will be simply noted by $\f$. 
In the following, dimensionless variables are considered, in such a way that $S\in [0,1]$.

For the sake of simplicity and the clarity of the presentation, the derivation of the model is done here in a simplified context: we consider a dilute spray at high Knudsen number and small and spherical droplets at low Weber number, obtained after the atomization. 
Under these assumptions, secondary breakup, coalescence and  collision  can be neglected. 
We also assume that  thermal transfer can be neglected, in such a way that the variable $T$ is no longer considered, that the only force acting on the droplets is the drag force, modeled by the Stokes law and we will mainly focus on one-way coupling. 
Then, the NDF satisfies the following simplified and dimensionless form of the Williams-Boltzmann Equation  \cite{williams1958}:
\begin{equation}
  \partial_{{t}} {\f}
 +\partial_{{\xv}}\cdot\left({\C} {\f}\right)
 +\partial_{{\C}}\cdot\left(\frac{{\Ug}- {\C}}{\Stokes({\size})} {\f}\right)
 +\partial_{{\size}}({\EvapRate} {\f})
=0,
\label{W-B-dimensionless}
\end{equation}
where $\Stokes({\size})=\theta {\size}$ is the Stokes number 
and $\EvapRate$ is the dimensionless evaporation rate, assumed to be constant: $\EvapRate(S)=-\KEvap$.
We refer the readers to the following articles and references therein \cite{laurent2015,kah2015,doisneau14}, showing that such a mesoscopic approach is capable of describing droplet heating, coalescence and break-up and two-way coupling.

The high dimension of the phase space makes its direct discretization too costly for complex applications. Since the resolution of the NDF is not required and only macroscopic quantities of the flows are needed for applications, an Eulerian moment method can be used to reduce the complexity of the problem. 
The closure of the velocity distribution requires a specific treatment, especially when we are concerned with modeling  particle trajectory crossing at high Knudsen numbers and moderate to large inertia. 
For accurate modeling of such phenomenon, one can use high order velocity-moment based on classical equilibrium approaches in kinetic theory, and we rather rely on the Levermore hierarchy \cite{levermore95,levermore96,sabat2016}.
In the present work, we are not concerned with 
these modeling issues, and we will consider a monokinetic velocity distribution \cite{kah2015,sabat2016}:
\begin{equation}
\f(t,\xv,\C,\size)=\ns(t,\xv,\size)\delta(\C-\U(t,\xv,\size)).
\label{eq:monokinetic-dis}
\end{equation}
This closure does not take into account particle trajectory crossing, since only one velocity is defined per position and time. This assumption is valid for low inertia droplets, when the droplet velocities are rapidly relaxed to the local gas velocity \cite{sabat2016}. 
We then derive the following semi-kinetic system from equation \eqref{W-B-dimensionless} by considering moments of order $0$ and $1$ in velocity:
\begin{equation}
\begin{array}{rcl}
\partial_t\ns+\partial_{\xv}\cdot\left(\ns\U\right)&=&\KEvap\,\partial_{\size}\ns,\\
\partial_t\ns\U+\partial_{\xv}\cdot\left(\ns\U\otimes\U\right)&=&
\KEvap\,\partial_{\size}\left(\ns\U\right)+\ns\dfrac{\Ug-\U}{\Stokes(\size)}.
\end{array}
\label{eq:semi-kinetic}
\end{equation}


\subsection{High order size-moment model: closure and moment space}
In the present contribution, we adopt the high order size-moment method with a continuous reconstruction of the size distribution to model a polydisperse spray. This approach, based mainly on \cite{massot2010,kah12,kah2015}, consists in deriving a
dynamical system on a finite set of size-moments of the NDF. The integer size-moments are defined as follows, with $N\ge 1$ for a non dimensional size interval $[0,1]$,
$\Mom=(\mom_0,\ldots,\mom_N)^t$, with $\mom_k=\int_0^{1}\size^k\ns(t,\xv,\size)\mathrm{d}\size$.
For the so-called Eulerian Multi Size Moment (EMSM) model, the droplet velocity is assumed to be independent on the size: $\U(t,\xv)$.
Vi\'e at al. \cite{vieJCP2012} developed a model able to deal with this dependence, but it is beyond the scope of this paper.
The system of the EMSM model obtained from an integration of the semi-kinetic system \eqref{eq:semi-kinetic} over $\size \in [0,1]$ multiplied by $\size^k$ reads:
\begin{equation}
\begin{array}{cccl}
\partial_t\mom_0+\partial_{\xv}.(\mom_0\U)&=&-\KEvap \ns(t,\xv,\size=0)& ,\\
\partial_t\mom_k+\partial_{\xv}.(\mom_k\U)&=&-k\KEvap\ \mom_{k-1}& ,\\[6pt]
\partial_t(\mom_1\U)+\partial_{\xv}.(\mom_1\U\otimes\U)&=&-\KEvap\ \mom_0\U
&+\mom_0\dfrac{\Ug-\U}{\theta}.
\end{array}
\end{equation}
where $k=1\hdots N$.
 The unclosed term $-\KEvap \ns(t,\xv,\size=0)$ expresses the pointwise disappearance flux of droplets through evaporation. Even though, this term is only involved in the first equation, it contributes in the evolution of the other moments in the same way through the terms $-k\KEvap \mom_{k-1}$ 
 \cite{massot2010}. Before proposing a closure of this model, let us recall the definition of the moment space and some useful properties. We denote by $P$ the set of all probability density measures of the interval $[0,1]$. Then the Nth ``normalized" moment space $\mathbb{M}_N$ is defined as follows:
\begin{equation}
\mathbb{M}_N=\left\{\tens{c}_N(\mu), \mu\in P\right\},\hspace{0.2cm}\tens{c}_N=(c_0(\mu),\hdots,c_N(\mu)),\hspace{0.2cm} c_k(\mu)=\int_0^1x^k\mathrm{d}\mu(x).
\end{equation}
We have $c_0=1$, since we use probability density measures. We can also normalize by $\mom_0$ the moment vector $(\mom_0,\hdots,\mom_N)^t$ and obtain $(c_0,\hdots,c_N)^t\in \mathbb{M}_N$, where $c_k=\mom_k/\mom_0$. The Nth ``normalized" moment space is a convex and bounded space.
\begin{definition}
The $N^{\mathrm th}\!$ moment space is defined as the set of vectors $\Mom$,
for which the vector normalized by $\mom_0$ belongs to the $N^{\mathrm th}$ normalized moment space. 
\end{definition}
Considering this definition and some results from \cite{dette97}: if $(\mom_0,\hdots,\mom_N)^t$ is in the interior of the moment space, there exists an infinity of size distributions, which represent this moment vector. In other words, there exists an infinity of size distributions $\ns(\size)$, which are the solution of the following finite Hausdorff moment problem:
\begin{equation}
\mom_k=\int_0^1\size^k\ns(\size)\mathrm{d}\size,\hspace{0.3cm} k=0\hdots N.
\label{moment-sys}
\end{equation} 
Massot et al \cite{massot2010} proposed to use a continuous reconstruction of the 
size distribution through the maximization of Shannon entropy:
\begin{equation}
H(\ns)=-\int_0^1\ns(\size)\ln(\ns(\size))\mathrm{d}\size.
\label{eq:shanonn-entropy}
\end{equation}
The existence and uniqueness of a size distribution $\nME(\size)$, which maximizes the Shannon entropy \cite{Tagliani1999} and is the solution of the finite Hausdorff moment problem \eqref{moment-sys} was proved in \cite{mead84} for the moments of integer order, when the moment vector belongs to the interior of the moment space and the solution is shown to have the form:
\begin{equation}
\ns(\size)=\exp\left(-(\lambda_0+\lambda_1\size+\hdots+\lambda_N\size^N)\right),
\end{equation}
where coefficients $\lambda_k$, are determined from \eqref{moment-sys}. 
The resolution of this nonlinear problem can be achieved by using Newton-Raphson method. 
The limitation of this algorithm when the moment vector is close to the moment space boundary, or equivalently 
when the ME reconstructed size-distribution degenerates to a sum of  Dirac delta functions, is discussed in \cite{massot2010}. 
Vi\'e et al \cite{vieJCP2012} proposed some more advanced solutions to cope with this problem, 
by tabulating the coefficients depending on the moments and by using an adaptive support for the integral calculation, 
which enables an accurate computation of the integral moments when the NDFs are nearly singular.

\subsection{Limitation of such an approach to a disperse phase model}
Even though this high order moment formalism provides some key informations about polydispersion using only one size-section, 
 it is restricted to the disperse phase zone. Coupling such an approach 
 with a separated-phases model requires some complementary informations, which the usual approaches of diffuse interface models can not provide. Indeed, diffuse interface models \cite{drui_PhD,saurel2018} 
consider the interface as a smooth transition layer, where we have lost important informations about the interface geometry. The first step, would be to enrich the diffuse interface models \cite{drui_PhD,essadki2018} 
in order to transport averaged geometrical variables to gain accuracy. 
 Nevertheless, even in the framework of such an enriched diffuse interface model, the coupling of two very different models is usually cumbersome and relies on parameters the described physics will depend on. 
 
 Consequently, we adopt an original strategy and build a high order moment model for the disperse phase, which possesses the same key properties as the EMSM model in terms of accuracy, robustness and computational cost, but 
  involves a different set of variables that are describing the averaged interface geometry, so that we end up with a set of common variables in the two zones and can potentially help in building a single unified model able to capture the proper physics in both zones. In order to introduce 
  the new set of variables, we first have to recall the natural geometrical variables in the separated-phases zone, before extending this description to the disperse phase. 

  \subsection{Geometrical description of interfacial flows}
In many two-phase flow applications, the exact location of each phase is difficult to determine precisely because of the different unpredictable phenomena such as turbulence, interface instabilities and other small scale phenomena. Fortunately, in industrial applications, we are more concerned with the averaged features than to the small details of the flow. Therefore, we can use Diffuse Interface Models 
\cite{drew83,ishii75,drui_PhD,saurel2018} to describe the interface location in terms of probability and averaged quantities based on averaged operators (ensemble averaged, time averaged or volume averaged). In the following, we define some averaged geometrical variables, which can be used to model the interface in separated-phases for a complementary geometrical description. Their definitions are based here on the volume averaged operator following the derivation of Drew \cite{Drew_Geom}. First, we define the phase function $\chi_k(t,\xv)$ for a given phase $k$ by
$\chi_k(t,\xv)=1$ if $\xv\in k$ and $\chi_k(t,\xv)=0$ otherwise.
The volume-averaged operator is defined by: $\overline{(\bullet)}(t,\xv)=\frac{1}{|V|}\int_{V}(\bullet) \chi_k(t,\xv^{\prime})\mathrm{d}\xv^{\prime}$,
where $V\in \mathbb{R}^3$ is a macroscopic space around the position $\xv$, and $|V|$ is the occupied volume, with $\eta^3<<|V|<<L^3$, where $\eta$ (resp. $L$) is the fluctuations (resp. system) length scale.

The Diffuse Interface Models can be obtained by averaging the monophasic fluid equations. The obtained equations involve the volume fraction, which allows to locate the interface up to the averaging scale and is then a first piece of information about the interface geometry:
\begin{equation}
\Alphas_k(t,\xv)=\frac{1}{|V|}\int_{V}\chi_k(t,\xv^{\prime})\mathrm{d}\xv^{\prime}.
\end{equation}
The second variable treated by Drew \cite{Drew_Geom} and used also in other two-phase flow models \cite{vallet2001,lebas2009} is the interfacial area density. The importance of this variable relies mainly on the modeling of exchange terms (evaporation, thermal transfer and drag force) as well as modeling the primary atomization. This variable is interpreted as the ratio of the surface area of an interface contained in a macroscopic volume and this volume.
\begin{equation}
\Sigmas(t,\xv)=\dfrac{1}{|V|}\int_{V}||\tens{\nabla}\chi_k(t,\xv^{\prime})||\mathrm{d}\xv^{\prime}.
\end{equation}
So far, the interface modeling is still incomplete, since no information on the interface shape is being given. In fact, the small details of the interface can not be modeled accurately using only two geometrical variables. Drew proposed to introduce the mean $\Hcurv=\frac{1}{2}(\curv_1+\curv_2)$ and Gauss $\Gcurv=\curv_1\curv_2$ curvatures of the interface in his model, where $\curv_1$ and $\curv_2$ are the two principal curvatures.

These variables are defined only at the interface. Therefore, we need a specific averaging for these interfacial variables. So, we introduce the interfacial averaging operator $\widetilde{(\bullet)}$, defined as follows: $\Sigmas\ \widetilde{(\bullet)}(t,\xv)=\frac{1}{|V|}\int_{V}(\bullet) ||\tens{\nabla}\chi_k(t,\xv^{\prime})||\mathrm{d}\xv^{\prime}.$
The interfacial averaged Gauss and mean curvatures weighted by the interface are defined as follows:
\begin{equation}
    \ds\Sigmas\widetilde{\Hcurv}=\frac{1}{|V|}\int_V\Hcurv||\tens{\nabla}\chi_k(t,\xv^{\prime})||\mathrm{d}\xv^{\prime},\quad
    \ds\Sigmas\widetilde{\Gcurv}=\frac{1}{|V|}\int_V\Gcurv||\tens{\nabla}\chi_k(t,\xv^{\prime})||\mathrm{d}\xv^{\prime}.
\end{equation}
 Drew \cite{Drew_Geom} derived conservative equations for these variables with source terms, which describe the stretch and the wrinkling of the interface. Its derivation is based on a kinematic evolution of an interface, when the interface velocity is given. In real application, the interface velocity can be determined from the diffuse interface model. In the following, we express these geometrical variables in the disperse phase as size-moments of the NDF, and derive a new high order moment model using such variables.
\section{Geometrical high order moment model}


\subsection{Interfacial geometrical variables for the disperse phase}
Let us consider a population of spherical droplets represented by their size distribution $\ns(t,\xv,\size)$. Then, by analogy with the separated phases, we express the averaged geometrical variables: volume fraction, interface area density, Gauss curvature and mean curvature in the disperse phase. The definition of these geometrical variables is based on the phase function $\chi_k$. This function contains all the microscopic information about the interface. In the disperse phase, we use the statistical information about the spherical droplet distribution, which is given by the size distribution $n(t,\xv,\size)$. Considering this function, we define the different geometrical variables in the context of a polydisperse spray as follows:

\begin{enumerate}
\item The \textit{volume fraction} $\Alphas_d$ is the sum of the volume of each droplet 
divided by the contained volume at a given position:
\begin{equation}
\Alphad=\int_0^{1}V(\size)\ns(t,\xv,\size)\mathrm{d}\size, \quad \ds V(\size)=\frac{\size^{3/2}}{6\sqrt{\pi}}.
\end{equation}
\item The \textit{interfacial area density} $\Sigmas_d$ is the sum of the surface of each droplet divided 
by the contained volume at a given position:
\begin{equation}
\Sigmad=\int_0^{1}\size\ns(t,\xv,\size)\mathrm{d}\size.
\end{equation}
\item The two local curvatures are equal for a spherical droplet $\curv_1=\curv_2=\frac{2\sqrt{\pi}}{\sqrt{\size}}$. 
But since we use the mean and Gauss curvatures, we can define two different averaged quantities. 
Let us notice that, in the case of separated phases, the \textit{average mean and Gauss curvatures} 
were defined as an average over a volume and weighed by the interfacial area. In the disperse phase case, this becomes:
\begin{equation}
\ds \Sigmad\widetilde{\Hcurv}_d=\int_0^{1}\Hcurv(\size)\size \ns(t,\xv,\size)\mathrm{d}\size,\quad
\ds \Sigmad\widetilde{\Gcurv}_d=\int_0^{1}\Gcurv(\size)\size\ns(t,\xv,\size)\mathrm{d}\size.
\end{equation}
\end{enumerate}
These four geometrical variables are expressed as fractional moments of the size distribution $\mom_{k/2}=\int_0^1\size^{k/2}\ns(S)\mathrm{d}S$:
\begin{equation}
\begin{array}{rclrrcl}
\Sigmad\widetilde{\Gcurv}_d &=&4\pi \mom_{0/2},\quad&
\Sigmad\widetilde{\Hcurv}_d &=&2\sqrt{\pi} \mom_{1/2},\\
\Sigmad &=& \mom_{2/2},\quad&
\Alphad &=& \frac{1}{6\sqrt{\pi}}\mom_{3/2}.
\end{array}
\end{equation}
These moments can be expressed as integer moments by simple variable substitution $x=\sqrt{\size}$. However, we prefer to hold the droplet surface as the size variable, since we consider a $d^2$ evaporation law, where the evaporation rate $\EvapRate$ is constant.


\subsection{The governing moment equation}
\label{sect:EMSMG}
In this section, we derive from the kinetic equation \eqref{W-B-dimensionless} a high order fractional moment model. This model gives the evolution of the mean geometrical interfacial variables due to transport, evaporation and drag force and reads: 
\begin{equation}
\left\{
 \begin{array}{l@{\ }c@{\ }l@{\ }c@{\ }l@{}l}
  \partial_t\mom_{0/2}    &+& \partial_{\xv}\cdot\left(\mom_{0/2}\U\right)&=&-\KEvap\ns(t,\xv,\size=0) ,\\
  \partial_t\mom_{k/2}&+& \partial_{\xv}\cdot\left(\mom_{k/2}\U\right)&=&-\dfrac{k\KEvap}{2}\mom_{(k-2)/2},\\[4pt]
  \partial_t\left(\mom_{2/2}\U\right)&+& \partial_{\xv}\cdot\left(\mom_{2/2}\U\otimes\U\right)
    &=&-\KEvap \mom_{0/2}\U&+\mom_{0/2}\dfrac{\U_g-\U}{\theta},
 \end{array}\right.
 \label{eq:EMSMG}
\end{equation}
where $k\in\{1,2,3\}$ and $-\KEvap\ns(t,\xv,\size=0)$ represents the pointwise disappearance flux, and the moment of negative order, $\mom_{-1/2}=\int_{0}^{1}\size^{-1/2}\ns(t,\size)\mathrm{d}\size$, naturally appears in the system after integrating by part 
the evaporation term in the Williams-Boltzmann Equation. In the following, the source terms will be closed by a smooth reconstruction of the size distribution through entropy maximisation.

The use of fractional moments introduces a new mathematical framework as well as some numerical difficulties, which require a specific treatment. Some useful mathematical properties of the fractional moments space are discussed in the Appendix \ref{appendix:fractional_moment_space}. 
These results will be used to design realizable numerical schemes, i.e. schemes that ensure the preservation of the moment vector in the moment space. 


\subsection{Maximum Entropy reconstruction}
\label{chap2:ME}
NDF reconstruction through the maximum entropy (ME) provides a smooth reconstruction to close the moment system \eqref{eq:EMSMG} as it was done in the EMSM model. The ME reconstruction consists in maximizing the Shannon entropy  \eqref{eq:shanonn-entropy}, under the condition that the first $N+1$ (here $N=3$) moments of the size distribution are equal to the computed moments:
\begin{equation}\label{eq:MEconstraints}
\mom_{k/2}=\int_0^1S^{k/2}n(S)\mathrm{d}S, \quad k=0\hdots N.
\end{equation}
In the Appendix \ref{SM:ME-existence and uniqueness}, the existence and uniqueness of the ME distribution function is shown, as presented in the following Theorem.
The proof is based on the method of Lagrange multipliers, using some ideas of Mead and Papanicolaou \cite{mead84}, who already showed this result in the case of integer moments, but in a different and simplified way.
Indeed, in our case, we only use the definition of the fractional moment space, given in the Appendix \ref{appendix:fractional_moment_space}.
\begin{theorem}
If the vector $\Mom_{N}=(\mom_0,\mom_{1/2},..,\mom_{N/2})$ belongs to the interior of the Nth fractional moment space, then there exists a unique NDF $\nME$ maximizing the Shannon entropy defined by \eqref{eq:shanonn-entropy} under the constraints \eqref{eq:MEconstraints}.
Moreover, it has the following form:
\begin{equation}
\nME(\size)=\exp\left(-\lambda_0-\sum\limits_{i=1}^{N}\lambda_i\size^{i/2}\right).
\end{equation}
\end{theorem}
Moreover, in the Appendix \ref{SM:ME-algo}, an algorithm is given to compute the parameters $\lambda_i$ from the fractional moments.



\section{Evaporation source term}
In this section, we focus on the numerical resolution of the evaporation source terms. 
The kinetic equation then reads:
\begin{equation}
\partial_tn-\partial_S(Kn)=0.
\label{eq:evap-kinet}
\end{equation}
The integration of this equation multiplied by the vector $(1,\size^{1/2},\size,\size^{3/2})^t$ yields to a system of ordinary 
differential equations for the moments defined on $[0,1]$:
\begin{equation}
\left\{
 \begin{array}{rclrrcl}
d_t\mom_{0/2}&=&-\nME|_{S=0},\quad &
d_t\mom_{1/2}&=&-\frac{\KEvap}{2}\mom_{-1/2},\\[7pt]
d_t\mom_{2/2}&=&-\KEvap \mom_{0/2},\quad &
d_t\mom_{3/2}&=&-\frac{3\KEvap}{2}\mom_{1/2},\\
 \end{array}\right.
 \label{eq:Moment_equation-evap}
\end{equation}
where the $\nME|_{S=0}(\mom_{0/2},\mom_{1/2},\mom_{2/2},\mom_{3/2})$ is obtained by maximum entropy reconstruction.
Solving this system using classical integrators such as Euler or Runge-Kutta methods does not ensure the preservation of the moments in the moment space \cite{massot2010}. This property is essential for robustness and accuracy to reconstruct a positive NDF. 

\subsection{Exact kinetic solution through the method of characteristics}
The exact solution of the NDF evolution, in the case of $d^2$ evaporation law, can be easily obtained  by 
solving analytically the kinetic equation \eqref{eq:evap-kinet}:
\begin{equation}
n(t,S)=n(0,S+\KEvap t).
\label{eq:exact_kinet_solution_d2}
\end{equation}
For more general evaporation law, when the evaporation rate $\EvapRate(S)$ is a smooth function of the size, the kinetic equation can be solved by using the method of characteristics. Indeed, by multiplying the kinetic equation by $\EvapRate(S)$, we obtain the following equation in the function $\Gamma(t,S)=\EvapRate(S)n(t,S)$:
\begin{equation}
\partial_t\Gamma(t,S)+\EvapRate(S)\partial_S(\Gamma(t,S))=0.
\end{equation}
Then, for a given initial time $t_0$ and size $\Sd_o$, we define the one variable function $g(t)=\Gamma(t,\Langsize(t;t_0,\Sd_o))$, such that $\Langsize(t;t_0,\Sd_o)$ is the characteristic curve verifying:
\begin{equation}
\left\{
\begin{array}{rcl}
\dfrac{d\Langsize(t;t_0,\Sd_o)}{dt}&=&\EvapRate(\Langsize(t;t_0,\Sd_o)),\\
\Langsize(t_0;t_0,\Sd_o)&=&\Sd_o.
\end{array}\right.
\label{eq:Lagrang-size-evol}
\end{equation}
It is a simple fact that the derivative of $g(t)$ vanishes, which implies $g(t)=g(t_0)$ and then
$ 
\Gamma(t,\Langsize(t;t_0,\Sd_o))= \Gamma(t_0,\Sd_o) $.
Finally, we obtain the exact solution of the size distribution as follows:
\begin{equation}
\ndf(t,\Sd)=\dfrac{\EvapRate(\Langsize(t_0;t,\Sd))}{\EvapRate(\Sd)}\ndf(t_0,\Langsize(t_0;t,\Sd)).
\label{eq:exact-kinetic-sol2}
\end{equation}


\subsection{Inefficiency of the original evaporation scheme}
\label{sect:EMSM_alg}
It is possible to use the exact solution \eqref{eq:exact_kinet_solution_d2} or \eqref{eq:exact-kinetic-sol2} of \eqref{eq:evap-kinet}, at the kinetic level, to compute directly the moments. In this case, 
a Gauss-Legendre quadrature can approximate the moment-integrals. In \cite{mead84}, the authors showed that $24$ quadrature points are needed to have an accurate estimation of such integrals. However, when the evaporation rate $\EvapRate$ 
is not a constant, we will also need to solve the ODE \eqref{eq:Lagrang-size-evol} for each quadrature points. For this reason, we consider this 
direct method in computing the moments will be very costly in terms of CPU time for realistic simulations. 

Massot et al \cite{massot2010} proposed a realizable algorithm to solve the evaporation moment system in 
the case of integer moments. This algorithm is based on evaluating the disappearance evaporation flux, that is  the droplets that will completely evaporate during one time step, then use a simplified quadrature to 
evaluate the kinetic evolution of the moments. This algorithm reads in the case of fractional moments:
\begin{itemize}
\item From the moment vector $\Mom(t_n)=(\mom_{0/2},\mom_{1/2},\mom_{2/2},\mom_{3/2})^t$, we reconstruct 
the NDF by using the ME algorithm. Then, we compute the disappearance fluxes of the droplets which will be totally evaporated at the next step:
\begin{equation}
\tens{\Phi}_-(t_n)=\int_0^{\KEvap\Deltat}n_{ME}(t_n,s)\begin{pmatrix}
1\\
s^{1/2}\\
s\\
s^{3/2}
\end{pmatrix} \mathrm{d}s.
\label{eq:disappearance_flux}
\end{equation}
\item Using the Product-Difference algorithm \cite{gordon1968}, we compute the abscissas $S_j\geq \KEvap \Deltat$ and the weights $w_j\geq 0$ of the quadrature corresponding to of the moments\footnote{We can also use hybrid approach, in this case we take into account the evaporation fluxes coming from the right section: $\Mom(t_n)-\tens{\Phi}^{(i)}_-(t_n)+\tens{\Phi}^{(i+1)}_-(t_n)$, where $i$ indexes the size-section.} $\Mom^{[\KEvap \Deltat,1]}(t_n)=\Mom(t_n)-\tens{\Phi}_-(t_n)$. This corresponds to the resolution:
\begin{equation}
\mom^{[\KEvap \Deltat,1]}_{k/2}(t_n)=\sum\limits_{j=1}^{2}w_jS_j^{k/2}, \quad k\in\{0,1,2,3\},
\label{eq:al_emsm_quad}
\end{equation}
where $\Mom^{[\KEvap \Deltat,1]}(t_n)$ is the moment vector on $[\KEvap\Deltat,1]$.
\item Finally, we calculate the moments at the next step:
\begin{equation}
\mom_{k/2}(t_n+\Deltat)=\sum_{j=1}^{2}w_j(S_j-\KEvap \Deltat)^{k/2}.
\label{eq:al_size_evol}
\end{equation} 
\end{itemize}
However, in the case of the fractional moments, the present algorithm can not predict the correct kinetic evolution 
of the NDF. We illustrate this problem in Figure \ref{fig:emsm-algo}, where we 
compare the maximum entropy reconstructed NDF of the moment computed with the above algorithm and  
the exact kinetic solution of the NDF. It is clear that the solution obtained by the algorithm diverges 
from the exact solution. In the following, we will provide the reader with an explanation of the inefficiency of this algorithm in the 
case of fractional moments and  propose a new specific solution.
\begin{figure}
\centering
\includegraphics[width=.5\linewidth]{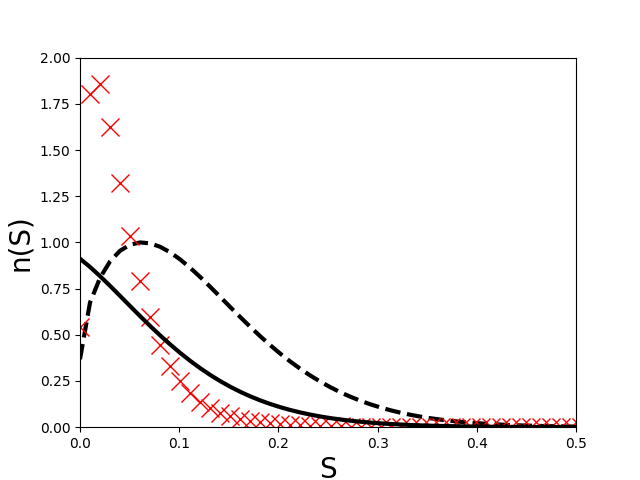}
\caption{Initial size distribution (dashed line) and the reconstructed size distribution at $t=0.1$: kinetic solution (solid line), original evaporation scheme for fractional moments (cross).}
\vspace{-0.5cm}
\label{fig:emsm-algo}
\end{figure}
\subsection{Involvement of negative moment order}
\label{subsect:adapted_evap_algor}
In order to understand the inefficiency of the previous algorithm proposed in section \ref{sect:EMSM_alg}, we propose 
to consider again a constant evaporation rate $\EvapRate(\Sd)=-\KEvap$. The moment solution expressed directly from 
the exact kinetic solution reads:
\begin{equation}
\begin{array}{r@{}c@{}l}
\mom_{k/2}(t\!+\! \Deltat)&=&\int_0^1S^{k/2}\ndf(t\!+\!\Deltat,S)\mathrm{d}S 
		=\int_0^1S^{k/2}\ndf(t,\Sd\!+\!\KEvap\Deltat)\mathrm{d}\Sd\\[6pt]
	&=&\int_{\KEvap\Deltat}^{1+\KEvap\Deltat}(S\!-\!K \Deltat)^{k/2}\ndf( t,\Sd)\mathrm{d}\Sd
		=\int_{\KEvap\Deltat}^{1}(S\!-\!K \Deltat)^{k/2}\ndf( t,\Sd)\mathrm{d}\Sd.	 
\end{array}
\label{eq:Kinetic-moment-sol}
\end{equation}
In the previous equality, we consider that $\ndf( t,\Sd)=0$ for $\Sd\geq 1$. In the following, we show that the 
updated moments can be written as a function of an infinite set of moments on the support 
$[\KEvap \Deltat,1]$. 
\begin{lemma}
For all positive integer $k$ and for all $x \in [-1,1]$ the function $f:x\rightarrow (1-x)^{k/2}$ admits a power series which converges normally to the function $f$:
\begin{equation}\label{eq:serie}
(1-x)^{k/2}=\sum_{n=0}^{+\infty}a_n^{k/2}x^n,
\end{equation}
\end{lemma}
\begin{proof}
The  even integer case $k=2m$ is trivial with $a_n^{k/2}=(-1)^n\binom{m}{n}\mathds{1}_{[0,m]}(n)$. 
Let us consider the case where $k=2m+1$ is an odd number. The function $f$ admits a power series of the form \eqref{eq:serie}.
Moreover, its coefficients can be written: $a_0^{k/2}=1$ and
\begin{equation}
a_n^{k/2}=(-1)^n \frac{(m+1/2)(m-1/2)\dots(m+1/2-n+1)}{n!},
\quad n>0.
\end{equation}
By using the Stirling's approximation, for $n\ge m+3/2$, we can show the following equivalence relation when $n$ tends to infinity.
\begin{equation}
|a_n^{k/2}| = \left|(-1)^{m+1}\frac{(2(n-m-1))!(2m+1)!}{(n-m-1)!m!2^{2n-1}n!}\right|
\sim \frac{(2m+1)!}{4^m\sqrt{\pi}m!}\frac{1}{n^{3/2+m}}.
\end{equation}
Therefore the series $\sum |a_n^{k/2}|$ is convergent for any integer $k\geq0$, thus concluding the proof. 
\end{proof}

We deduce that $\sum\limits_{n=0}^{\infty}a_n^{k/2}(\KEvap \Deltat)^ns^{k/2-n}$ converges normally to $(s-\KEvap \Deltat)^{k/2}$ for \\$s\geq \KEvap \Deltat$. Thus, we can invert the sum and the integral in the moment expression:
\begin{equation}
\begin{array}{rcl}
\mom_{k/2}(t+\Deltat)&=&\int_{\KEvap \Deltat}^1(s-\KEvap \Deltat)^{k/2}n(t,s)\mathrm{d}s,\\[9pt]
 &=&\sum\limits_{n=0}^{\infty}a_n^{k/2}(\KEvap \Deltat)^n\int_{\KEvap \Deltat}^1s^{k/2-n}\ndf(t,s)\mathrm{d}s,\\[9pt]
 &=& \sum\limits_{n=0}^{\infty}a_n^{k/2}(\KEvap \Deltat)^n \mom^{[\KEvap \Deltat,1]}_{k/2-n}(t).
\end{array}
\label{eq:series_algo}
\end{equation}
where $\mom^{[\KEvap \Deltat,1]}_{l/2}=\int_{\KEvap \Deltat}^1s^{k/2-n}\ndf(t,s)\mathrm{d}s$ is a fractional moment of support 
$[\KEvap \Deltat,1]$.

Equation \eqref{eq:series_algo} shows that the fractional moments at $t+\Deltat$ depend on the four fractional 
moments  ($m^{[\KEvap \Deltat,1]}_{l}$ where $l=0/2,1/2\hdots3/2$) and on an infinite set of the moments of support $[\KEvap \Deltat,1]$ ($m^{[\KEvap \Deltat,1]}_{l}$ where $l=-1/2,-2/2,\hdots-\infty$). In the case of the EMSM model, where only integer moments are used, the same expansion of the exact kinetic solution of the integer moments involves only the moments of the support $[\KEvap \Deltat,1]$ and of order $l\in\{0,1,2,3\}$. For this reason, the evolution of the integer moments can be evaluated exactly by translating the abscissas \eqref{eq:al_size_evol} of the quadrature \eqref{eq:al_emsm_quad}. The same strategy for fractional moments leads to the divergence of the method observed previously.  Since we aim at describing the evolution of the moments through the evolution of a minimal number of quadrature nodes, we propose 
a new quadrature,  suitable for fractional moments, such that the updated moments in \eqref{eq:al_size_evol} can approximate accurately the exact kinetic evolution \eqref{eq:series_algo}.

\subsection{A specific quadrature for negative moment order}

Our  objective is to find an adequate quadrature with the lowest possible quadrature number $\nq$, such that the following approximation:
\begin{equation}
\mom_{k/2}(t+\Deltat)\approx \sum\limits_{j=1}^{\nq}w_j(S_j-K\Deltat)^{k/2},
\label{eq:approx_algo_nemo}
\end{equation}
is accurate. More precisely, we would like to find a quadrature such that the difference, for $k\in\{0,1,2,3\}$ 
\begin{equation}
\begin{array}{r@{\ }c@{\ }l}
\epsilon_{k/2}(\Deltat)&=&\mom_{k/2}(t+\Deltat)-\sum\limits_{j=1}^{\nq}w_j(S_j-\KEvap \Deltat)^{k/2}\\
 &=&\sum\limits_{n=0}^{\infty}a_n^{k/2}(\KEvap \Deltat)^n \left( m^{[\KEvap \Deltat,1]}_{k/2-n}(t)-\sum\limits_{j=1}^{\nq}w_jS_j^{k/2-n}\right),
\end{array}
\label{eq:algo-error}
\end{equation}
is at least $o(\Deltat)$ to ensure the convergence of the numerical scheme. 
It is a difficult task to prove such a result, but we provide here a first result is this direction.
Our strategy consists in using a quadrature, which cancels a finite set of the first terms in the sum \eqref{eq:algo-error}, by enforcing $E_{p/2}=\mom^{[\KEvap\Deltat,1]}_{p/2}(t)-\sum\limits_{j=1}^{\nq}w_j\Sd_j^{p/2}$ to be zero for $p\in\{-2\negat,\dots,2,3\}$.
Thus $2\nq$ fractional moments on $[\KEvap\Deltat,1]$ are considered, with $\nq=\negat+2$, from order $-\negat$ to $3/2$.
As soon as $\negat\ge 0$, this is sufficient to have $\epsilon_{0/2}=0$ and $\epsilon_{2/2}=0$.
The following Lemma shows the existence and uniqueness of such quadrature and gives the corresponding value of the error terms $\epsilon_{k/2}(\Deltat)$.
\begin{lemma}
Let $\left(\mom^{[\KEvap\Deltat,1]}_{p/2}\right)_{p\in\{-2\negat,\dots,3\}}$ be a set of fractional moments on $[\KEvap\Deltat,1]$ in the interior of the moment space and $\negat$ a non negative integer. 
There exists a unique set of abscissas $(S_j)_{j\in\{1,\dots,\nq\}}$ and weights $(w_j)_{j\in\{1,\dots,\nq\}}$, with $\nq = \negat+2$ such that 
\begin{equation}\label{eq:quad}
\forall k\in\{-2\negat,\hdots,3\} \qquad \mom^{[\KEvap \Deltat]}_{k/2}=\sum\limits_{j=1}^{\nq}w_jS_j^{k/2}.
\end{equation}
Moreover one have $w_j=w^{\prime}_jr_j^{2\negat}$ and $S_j=r_j^2$, where $r_j$ and $w^{\prime}_j$ are the abscissas and weights of the Gauss quadrature corresponding to the moments $m_k=\mom^{[\KEvap\Deltat,1]}_{k/2+\negat}$ for $k\in\{0,\dots,2\nq-1\}$.
\\
Using this quadrature, the error terms defined by \eqref{eq:algo-error} can be written:
\begin{equation}
\begin{array}{rclrrcl}
\epsilon_{0/2}(\Deltat)&=&0,\quad &
\epsilon_{1/2}(\Deltat)&=&\left(\sum\limits_{n=\nq-1}^{+\infty}a_n^{1/2}\bar E_{(2n-1)/2}\right)(\KEvap\Deltat)^{1/2},\\[9pt]
\epsilon_{2/2}(\Deltat)&=&0,\quad &
\epsilon_{3/2}(\Deltat)&=&\left(\sum\limits_{n=\nq}^{+\infty}a_n^{3/2}\bar E_{(2n-3)/2}\right)(\KEvap\Deltat)^{3/2},
\end{array}
\label{eq:errors_negative_quad}
\end{equation}
where $\bar E_{p/2}= ({\KEvap\Deltat})^{p/2}\left( \mom^{[\KEvap\Deltat,1]}_{-p/2}(t)-\sum\limits_{j=1}^{\nq}w_j\Sd_j^{-p/2}\right)$ is bounded by $\mom^{[\KEvap\Deltat,1]}_{0/2}$. 
\end{lemma}

\begin{proof}
To be able to compute the weights $w_j$ and abscissas $S_j$ of the quadrature \eqref{eq:quad}, the fractional moments are transformed to integer ones, with non negative orders.
Indeed, 
using the change of variable $r=\sqrt{S}$, $E_{p/2+\negat}$ can be written, for $k\in\{0,\dots,2\nq-1\}$:
\begin{equation}\label{eq:transfquad}
E_{p/2+\negat} \!=\! 
\int_{\KEvap \Deltat}^1\!s^{k/2-\negat}n(s)\mathrm{d}s \!-\! \sum\limits_{j=1}^{\nq}w_jS_j^{k/2-\negat}
\!=\! \int_{\sqrt{\KEvap \Deltat}}^1\!r^k{\mu}(r)\mathrm{d}r \!-\! \sum\limits_{j=1}^{\nq}w^{\prime}_jr_j^{k},
\end{equation}
with ${\mu}(r)=(2rn(r^2))/(r^{2\negat})$ for $r\in [\sqrt{\KEvap \Deltat},1]$, $r_j = \sqrt{S_j}\in[\sqrt{\KEvap \Deltat},1]$ and $w^{\prime}_j=w_jr_j^{-2\negat}\ge 0$.
Then $w^{\prime}_j$ and $r_j$ are defined as the unique weights and abscissas of the Gauss quadrature of order $\nq$ corresponding to moments $m_k=\mom^{[\KEvap\Deltat,1]}_{k/2+\negat}$ of $\mu$ on $[\sqrt{\KEvap \Deltat},1]$ for $k\in\{0,\dots,2\nq-1\}$.

By using this quadrature in equation \eqref{eq:algo-error}, we cancel the $k+\negat$ first terms in the series, thus obtaining \eqref{eq:errors_negative_quad}.
The quantities $\bar E_{p/2}$ are bounded, since $S_j\ge \KEvap\Deltat$ and
\begin{equation*}
|\bar E_{p/2}|\leq \max\left\{({\KEvap\Deltat})^{p/2} \mom^{[\KEvap\Deltat,1]}_{-p/2}, 
({\KEvap\Deltat})^{p/2}\sum\limits_{j=1}^{\nq}w_j\Sd_j^{-p/2}\right\} 
\leq  \mom^{[\KEvap\Deltat,1]}_{0/2}.
\end{equation*}
\ 
\end{proof}

However, it is still a rough bound, since this only leads to 
$\epsilon_{1/2}=O(\Deltat^{1/2})$, when $\Deltat$ goes to zero, which is not sufficient 
to prove the convergence. 
An accurate estimation of the Gauss quadrature errors is needed to improve the bound, which should be investigated further in future works.

It seems now clear that the time evolution of the fractional moments depends on other moments than the ones involved in the moment system \eqref{eq:EMSMG} and we cannot restrict the quadrature to the mere moments of positive order, which also corresponds to taking $\negat=0$, if we want to be able to provide a good approximation of the moment evolution. This is thus the key issue with fractional moment evolution, since 
we need at least to consider some moments of negative order in the quadrature, i.e.  $\negat>0$. 
After a series of test-cases, we would be tempted to conjecture that for $\negat=1$, where two supplementary moments of negative order ($m^{[\KEvap\Deltat,1]}_{-1/2}$ and  $m^{[\KEvap\Deltat,1]}_{-2/2}$) are also represented by the quadrature, the solution approximates accurately the exact kinetic solution.  We were not able to complete the proof of such a conjecture and some of these results are presented in Section \ref{sect:Results}. 
It seems that the terms $\bar E_{l/2}$ where $l=1,2$ in equation \eqref{eq:errors_negative_quad} cause the divergence of the standard algorithm proposed in section \ref{sect:EMSM_alg}. 
This is an important result, since we need only a total of three quadrature nodes $\nq=3$ to cancel the terms $E_{l/2}$ for $l=1,2$ and to capture correctly the kinetic evolution. Besides, we can use more quadrature points to increase the precision by choosing $\negat\geq2$. 
Let us underline that the proper approximation and closure of the negative moments is here a key issue. 

\paragraph*{\textbf{New algorithm}:}
according to the last results, we propose a $4$-steps algorithm. This algorithm
is named \NEMO\ (Negative Moments) algorithm and described below:
\label{algo:new_evap_algorithm}
\begin{enumerate}
\item We reconstruct $\nME(S)$ corresponding to the moment vector $\Mom(t_n)$ by the ME algorithm, 
then we calculate the disappearance flux as in \eqref{eq:disappearance_flux},
\item We calculate the negative order moments in the interval $[\KEvap\Deltat,1]$
\begin{equation}
\mom_{-a/2}^{[\KEvap\Deltat,1]}=\int_{\KEvap \Deltat}^{1}s^{-a/2}\nME(s)\mathrm{d}s,
\label{eq:negative_moment}
\end{equation} 
for $a=1,\hdots,2\negat$, where $2\negat\geq 2$ is the number of additional moments of negative order used in this algorithm and chosen by the user. The other moments of positive order are computed using the disappearance flux: 
\begin{equation}
\mom_{k/2}^{[\KEvap \Deltat,1]}=\mom_{k/2}(0)-\Phi_{-,k/2}(t_n),\quad k=0,\hdots,3,
\end{equation} 
where $\tens{\Phi}$ is the disappearance flux given in \eqref{eq:disappearance_flux}.
\item The abscissas $S_j=r_j^2\in [\KEvap \Deltat,1]$ and the weights $w_j=w^{\prime}_jr_j^{2\negat}$ of the quadrature corresponding to the moments $m^{[\KEvap \Deltat,1]}_{p/2}$ for $p=-2\negat,\hdots,3$ are computed using the Product-Difference Algorithm \cite{gordon1968} corresponding to the moments $m_k=\mom^{[\KEvap\Deltat,1]}_{k/2+\negat}$ for $k\in\{0,\dots,2\nq-1\}$, leading to
\begin{equation}
m^{[\KEvap \Deltat,1]}_{p/2}=\sum\limits_{j=1}^{\nq}w_jS_j^{p/2}, \qquad p=-2\negat,\hdots,3
\label{eq:LPR_frac}
\end{equation}
 \item Finally, we calculate the updated moments as follows:
\begin{equation}
\mom_{k/2}(t_n+\Deltat)=\sum_{j=1}^{\nq}w_j(S_j-\KEvap\Deltat)^{k/2}.
\label{algo:NEMO-step4}
\end{equation}
\end{enumerate}
The singularities of the negative moment-integrals, when $\Deltat$ is very small, limits the use of high values of $\negat$. But in practice $\KEvap\Deltat>1.e-4$ and we will show that the choice of $\negat=1$ or $\negat=2$  are sufficient to obtain an accurate solution. In these cases, the integral computation of the negative order moments can be achieved correctly with $24$ Gauss-Legendre quadrature points. For more complex evaporation laws, the algorithm can be 
straightforwardly generalized by computing the Lagrangian evolution of the abscissas. In other words, equation \eqref{algo:NEMO-step4} becomes:
\begin{equation}
\mom_{k/2}(t_n+\Deltat)=\sum_{j=1}^{\nq}w_j\Langsize(t_n+\Deltat;t_n,S_j)^{k/2},
\end{equation}
and the integral of negative moments in \eqref{eq:negative_moment} becomes:
\begin{equation}
\mom_{-a/2}^{[\KEvap\Deltat,1]}=\int_{\Langsize(t_n;t_n+\Deltat,0)}^{1}s^{-a/2}\nME(s)\mathrm{d}s.
\end{equation}
\section{Numerical resolution of the moment governing equations}
\label{sect:Evap_drag}
In this section, we present the global numerical resolution of system of equations \eqref{eq:EMSMG}. 
To resolve this system, we adopt operator splitting techniques \cite{doisneau14,descombes14} to separate the resolution of the convective transport part and the source terms.  For the transport part, we extend the kinetic finite volume scheme \cite{kah12} used for the resolution of the EMSM model to the present case of fractional moments, while ensuring robust and realizable resolution. The obtained kinetic finite volume scheme is detailed in \ref{app:Transport-scheme}. The source terms consists of two parts: evaporation and drag force. In this paragraph, we present a coupled solver for the spray evolution 
under evaporation and drag force. The corresponding system of equations reads:
 \begin{equation}
 \left\{
 \begin{array}{rcl}
d_t\mom_{0/2}&=&-\KEvap \ndf(S=0),\\
d_t\mom_{1/2}&=&-\dfrac{\KEvap}{2}\mom_{-1/2},\\
d_t\mom_{2/2}&=&-\KEvap \mom_{0/2},\\
d_t\mom_{3/2}&=&-\dfrac{3\KEvap}{2}\mom_{1/2},\\
d_t\left(\mom_{2/2}\U\right)&=&-\KEvap \mom_{0/2}\U+\mom_{0/2}\dfrac{\Ug-\U}{\theta}.
 \end{array}\right.
 \label{eq:drag-evap}
\end{equation}
Since the first four equations do not depend on the last one, the updated moments can be computed using \NEMO\ algorithm. For the last equation, we use the method developed in \cite{vieJCP2012} to solve the velocity evolution due to the drag force and evaporation.

The momentum evolution is conducted in two steps: first, we remove the part of the droplets, which will completely evaporate during the time interval $[t_n,t_{n+1}]$, by evaluating the disappearance fluxes of the moments and  momentum:
\begin{equation}
\mathcal{U}^{[\KEvap\Deltat,1]}=\mathcal{U}-\begin{pmatrix}
\tens{\Phi}_-\\
\Phi_{-,2/2}\U
\end{pmatrix},
\end{equation}
where $\mathcal{U}=(\mom_{0/2},\hdots,\mom_{3/2},\mom_{2/2}\U^t)^t$ and $\tens{\Phi}_-$  is the disappearance flux vector of the moments \eqref{eq:disappearance_flux}.
Then, the quadrature abscissas $S_i(t_n)$ and weights $w_i(t_n)$ are computed from \eqref{eq:LPR_frac} from the moments on the support $[\KEvap\Deltat,1]$. 
The computation of the moments is achieved with \eqref{algo:NEMO-step4}. 
To update the momentum in this last step, we consider a size-velocity correlation: at time $t_n$, we attribute the velocity $\C_i(t_n)=\U(t_n)$ to each abscissa $S_i(t_n)$. 
Then, the evolution on the time step $[t_n,t_{n+1}]$ of $\C_i$ and $S_i$ are computed by solving the following system:
\begin{equation}
\dfrac{d\C_i}{dt}=-\dfrac{\Ug-\C_i}{\theta S_i}, \qquad
\dfrac{dS_i}{dt}=-\KEvap.
\label{lagrang-evap}
\end{equation}
The final momentum is computed at $t=t_{n+1}$ as follows:
 \begin{equation}
(\mom_{2/2}\U)(t_{n+1})=\sum\limits_{i=1}^{\nq}w_iS_i(t_{n+1})\C_i(t_{n+1}).
\end{equation}
The method can be generalized to more complex evaporation law by replacing $-\KEvap$ in equation \eqref{lagrang-evap} by a general evaporation rate $\EvapRate(S)$.


\section{Numerical results}
\label{sect:Results}
This section is dedicated to some representative test-cases and analysis of numerical results, to verify the robustness and the accuracy of the proposed numerical schemes. In the first part, we test the new evaporation algorithm in the case of $d^2$ law with $\EvapRate(\Sd)=-\KEvap=-1$ for two different initial conditions. A complementary evaporation test in the case of $\EvapRate(\Sd)=-(1+0.5\Sd)$ is also presented to test the algorithm accuracy with a non constant evaporation rate. The second part focuses on two cases of transport in $1D$. First, we test the accuracy of the kinetic finite volume schemes dedicated to the transport. Second, a critical case of a $\delta$-shock is performed to evaluate the robustness of these schemes. Finally, a 2D case of an evaporating spray in the presence of a steady gaseous flow field, given by Taylor-Green vortices, is presented, in order to qualify the robustness and accuracy of the method compared to EMSM with its original numerical schemes \cite{massot2010,kah12}.


\subsection{Evaporation in 0D simulation}
\subsubsection{Evaporation with $d^2$ law for an initial smooth NDF}
We consider an initial NDF in the form of the ME-reconstruction NDF, which is the same initial condition as the one used in Figure \ref{fig:emsm-algo}: 
\begin{equation}
n^0(S)=\exp(-16(S^{1/2}-1/4)^2(S^{1/2}+1)).
\end{equation}
\begin{figure}
\begin{subfigure}{.48\textwidth}
  \centering
  \includegraphics[width=.95\linewidth]{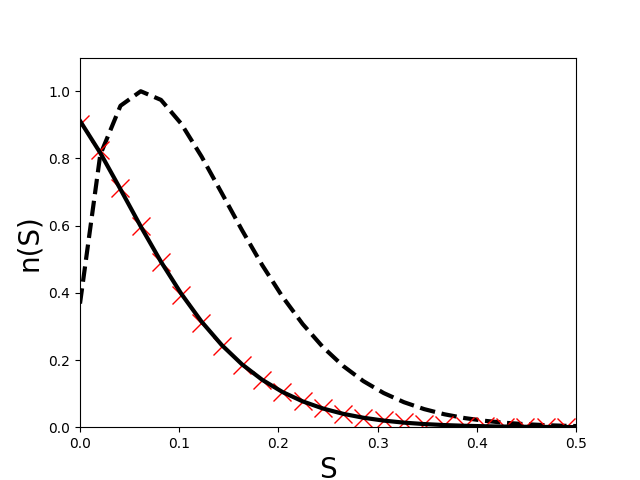}
  \label{fig:ndf_1-t02}
\end{subfigure}%
\begin{subfigure}{.48\textwidth}
  \includegraphics[width=.95\linewidth]{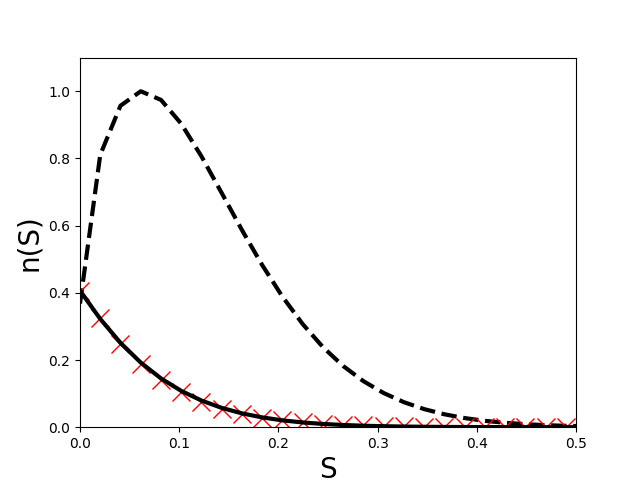}
  \label{fig:ndf_1_t04}
\end{subfigure}
 \caption{ME-reconstructed NDF solution of \NEMO\ with $\negat=1$ (cross), exact kinetic NDF (solid line) and the initial distribution (dashed line), at time $t=0.1$ (left) and $t=0.2$ (right).}
\label{fig:ndf-1}
\vspace{-0.4cm}
\end{figure}

\begin{figure}[!htb]
    \vspace{-0.5cm}
    \begin{minipage}{.48\textwidth}        
  \includegraphics[width=.95\linewidth]{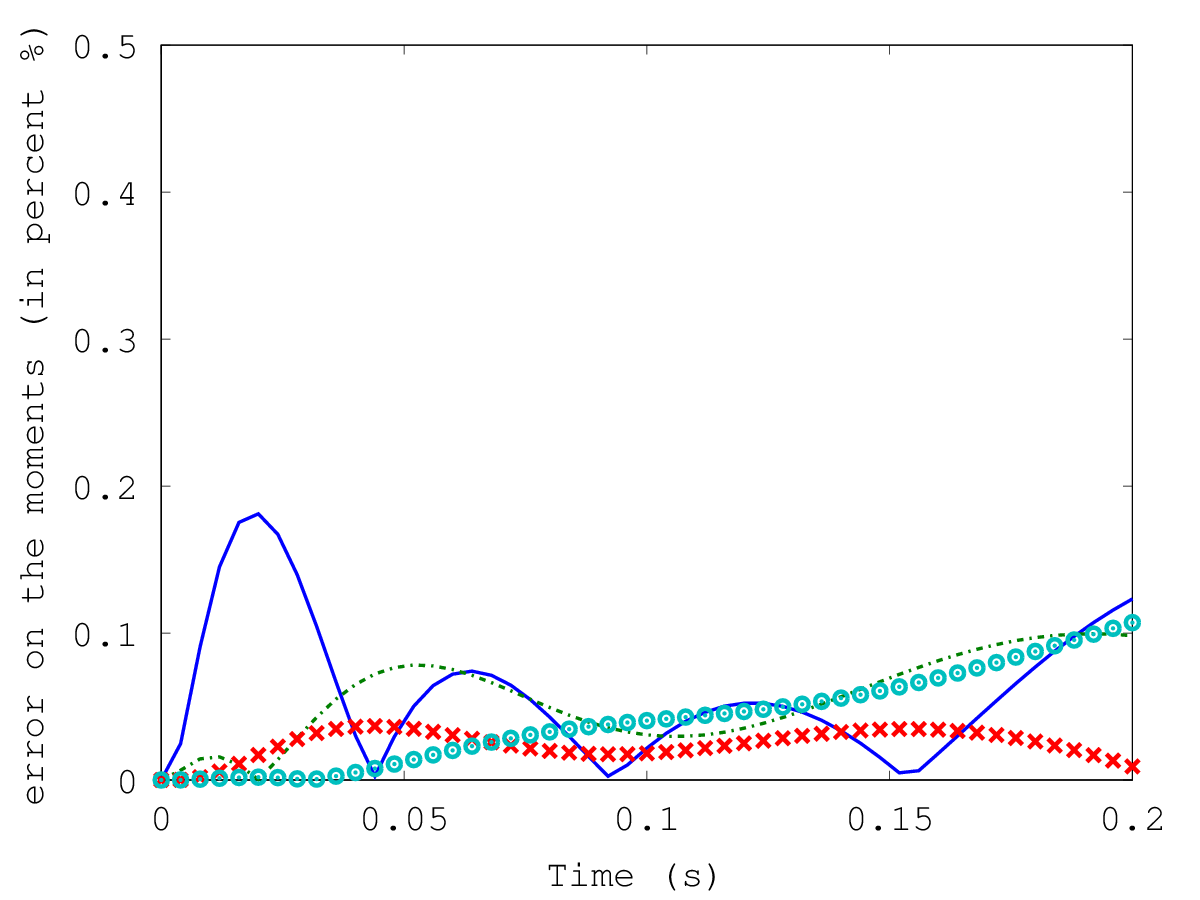}
 \caption{Relative moment errors, \NEMO\ ($\negat=1$): $\mom_0$ (solid line), $\mom_{1/2}$ (dash-dotted line), $\mom_{1}$ (cross) and $\mom_{3/2}$ (circle).} 
\label{fig:err_alg_1_nq2}
    \end{minipage}%
    \hspace{0.22cm}
    \begin{minipage}{0.48\textwidth}
         \includegraphics[width=1.0\linewidth]{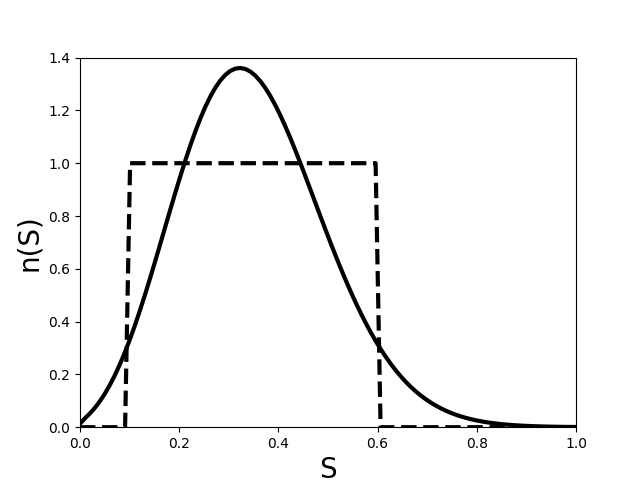}
   \caption{The ME-reconstructed NDF in the solid line and the initial discontinuous NDF in the dashed line.}
        \label{fig:initial_ndf_2}
    \end{minipage}
    \vspace{-0.5cm}
\end{figure}

In this section, we compare the solution obtained with \NEMO\ algorithm ($\negat=1$), using the time step $\deltat=0.002$, with the exact kinetic solution. One can see from Figure \ref{fig:ndf-1}, where we plot the reconstructed NDF of \NEMO\ solution and the exact kinetic distribution at $t=0.1$ and $t=0.2$, that the ME-reconstructed distribution follows accurately the exact distribution. It is important to notice that we are able to capture an accurate solution of the size distribution by using  only four moments. 
To go further in quantitative comparison, Figure \ref{fig:err_alg_1_nq2} shows the evolution of the relative error of the four fractional
 moments. The moment errors relatively to their initial values 
 do not exceed $0.2\%$. 
 \NEMO\ algorithm then allows to obtain an accurate solution of the moments by using only three quadrature points \eqref{algo:NEMO-step4}. This avoids to compute directly the integral in \eqref{eq:Kinetic-moment-sol} by using a large number of Gauss-Legendre quadrature points to approximate the moment-integrals. 
\subsubsection{$d^2$ law evaporation for a discontinuous initial NDF}
In this second case, we test the new algorithm \NEMO\ in the case of a discontinuous initial NDF:
\begin{equation}
    n^0(S)= 
\begin{cases}
   1,& \text{if } S \in [0.1,0.6]\\
    0,             & \text{otherwise}
\end{cases}
\label{eq:carre_initial_NDF}
\end{equation}
The initial NDF defined in \eqref{eq:carre_initial_NDF} and the initial ME-reconstructed NDF are plotted in Figure \ref{fig:initial_ndf_2}. In Figure \ref{fig:ndf-2}, we present the reconstructed NDF computed using \NEMO\ algorithm with $\negat=1$ at two different times. In the same figures, we compare the obtained results with the exact kinetic solution  \footnote{The exact kinetic solution is computed for the initial ME reconstructed size distribution.}. As in the previous case, \NEMO\ algorithm shows an accurate prediction of the exact kinetic solution. Furthermore, Figure \ref{fig:err-2}-left shows that the relative error of the moments are less than $1.\%$, in the case of $\negat=1$ and $\deltat=6.e-3$, which is an accurate result. We can improve the accuracy of \NEMO\ algorithm by using a smaller time step $\deltat=6.e-4$ or by using $\negat=2$ and keeping the same time step as the previous case. In figure \ref{fig:err-2}-right, we present the relative moment error using \NEMO\ algorithm with $\negat=2$ and $\deltat=6.e-3$. In this case, the relative 
moment errors are less than $0.3\%$.

\begin{figure}
\begin{subfigure}{.48\textwidth}
  \centering
  \includegraphics[width=.95\linewidth]{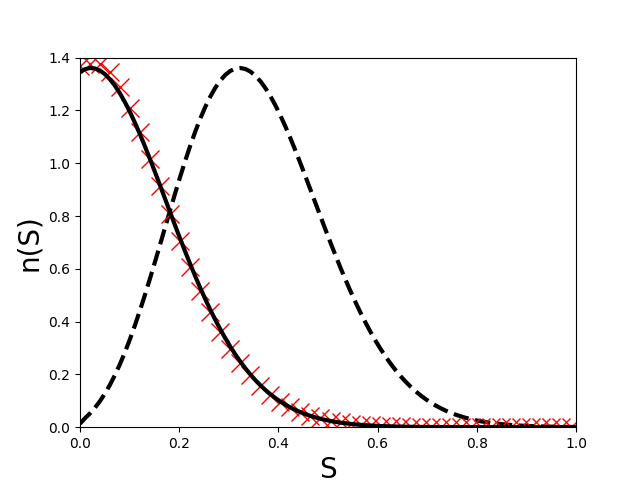}
  \label{fig:ndf_2_t02}
\end{subfigure}%
\begin{subfigure}{.48\textwidth}
  \includegraphics[width=.95\linewidth]{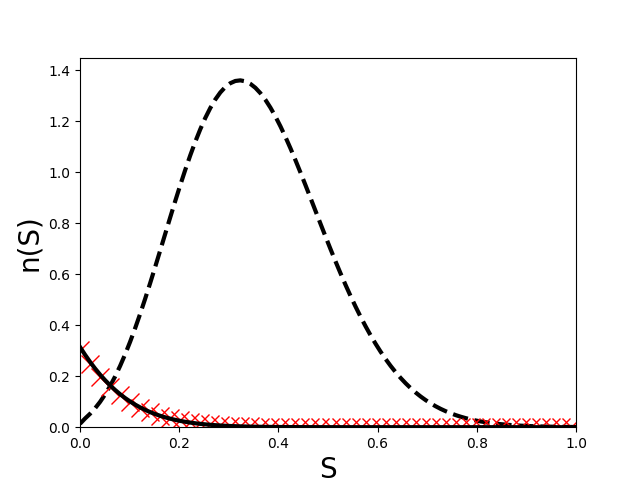}
  \label{fig:ndf_2_t04}
\end{subfigure}
 \caption{Solutions of ME-reconstructed NDF at $t=0.3$ (left) and $t=0.6$ (right) using: \NEMO\ with $\negat=1$ (cross), exact kinetic solution (solid line) and the initial distribution (dashed line).}
\label{fig:ndf-2}
\end{figure}

\begin{figure}
\begin{subfigure}{.48\textwidth}
  \centering
  \includegraphics[width=.95\linewidth]{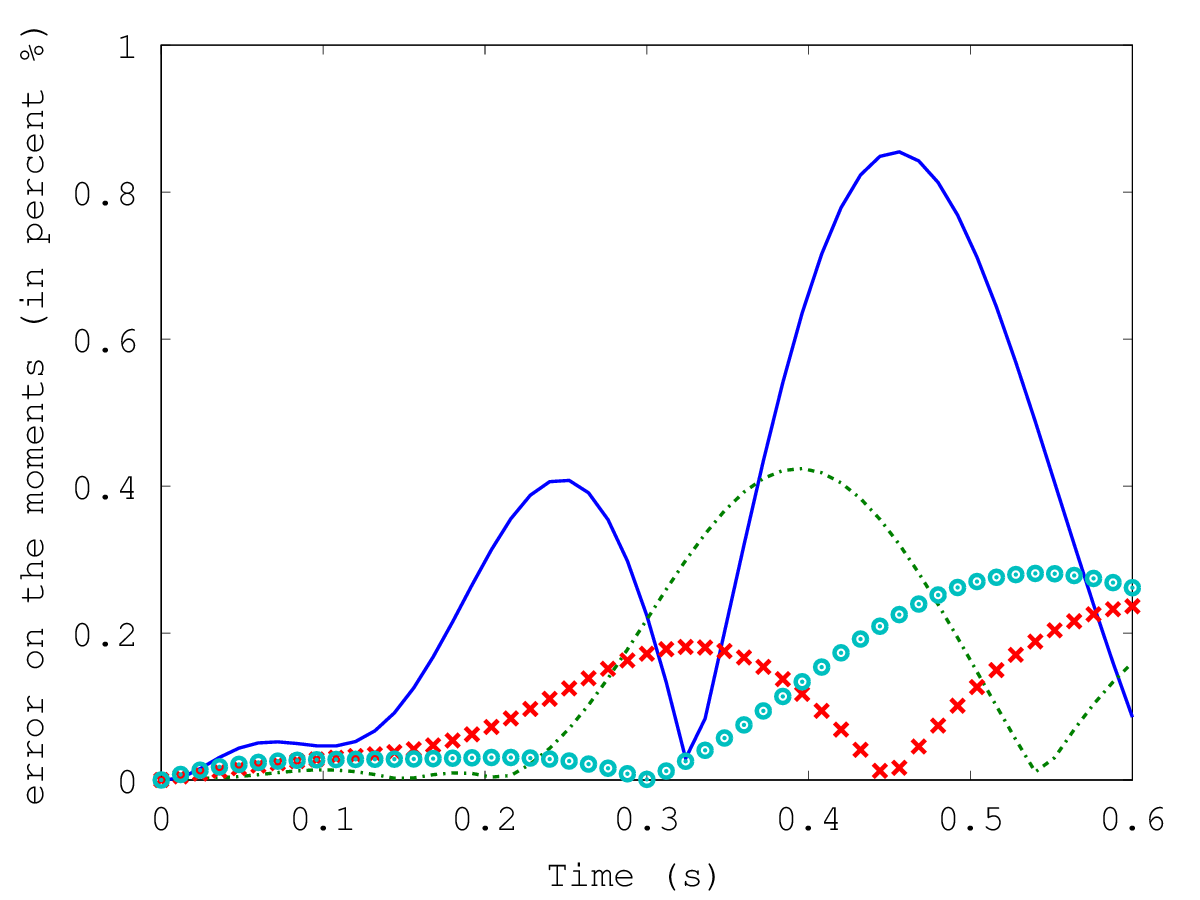}
  \label{fig:err_kin_2}
\end{subfigure}%
\begin{subfigure}{.48\textwidth}
 \includegraphics[width=.95\linewidth]{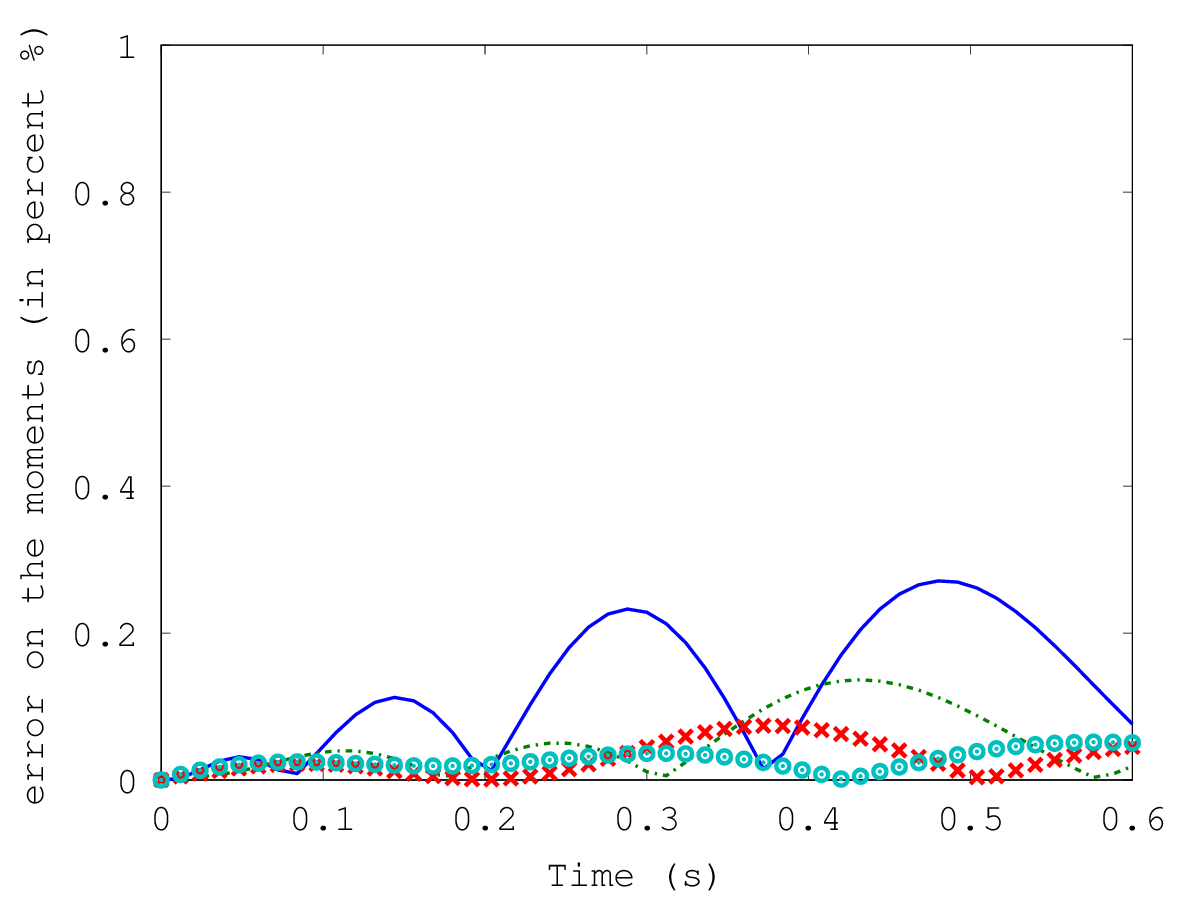}
  \label{fig:err_al_2}
\end{subfigure}
 \caption{Evolution of the relative moment errors using \NEMO\ with $\negat=1$ (left) and $\negat=2$ (right): $\mom_0$ (solid line), $\mom_{1/2}$ (dash-dotted line), $\mom_{1}$ (cross) and $\mom_{3/2}$ (circle).}
\label{fig:err-2}
\end{figure}

\subsection{Accuracy of \NEMO\ algorithm for non constant evaporation laws}
\label{app:linear-evaporation}
\NEMO\ scheme has been developed under the assumption of a $d^2$ law, but as it was explained before, the algorithm can be generalized for more complex laws by solving the Lagrangian equation \eqref{eq:Lagrang-size-evol} for each abscissas $\size_j$ given in the third step of the algorithm. In this section, we propose to evaluate the accuracy of the algorithm in the case where the evaporation rate depends linearly on the size:
\begin{equation}
R_s(S)=-(a+bS).
\end{equation}
The exact kinetic solution can be computed according to equation \eqref{eq:exact-kinetic-sol2}. In the following, we set $a=0.5$ and $b=1$. Figure \ref{fig:ndf-3} presents the ME-reconstructed NDF computed by \NEMO\ algorithm, 
and  the exact kinetic solution of the NDF at $t=0.3$ and $t=0.6$. The relative errors of the moments are given in Figure \ref{fig:err-3}. We can see from these results, the accuracy of the generalized \NEMO\ algorithm to predict the kinetic evolution.
\begin{figure}
\begin{subfigure}{.48\textwidth}
  \centering
  \includegraphics[width=.95\linewidth]{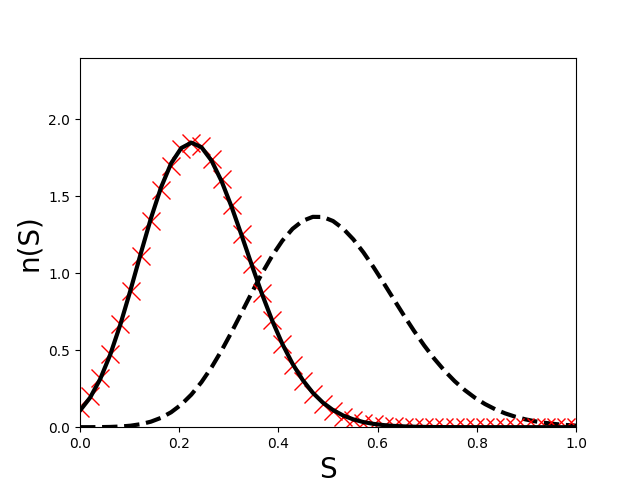}
  \caption{t=0.3} 
  \label{fig:ndf_3_t03}
\end{subfigure}%
\begin{subfigure}{.48\textwidth}
  \includegraphics[width=.95\linewidth]{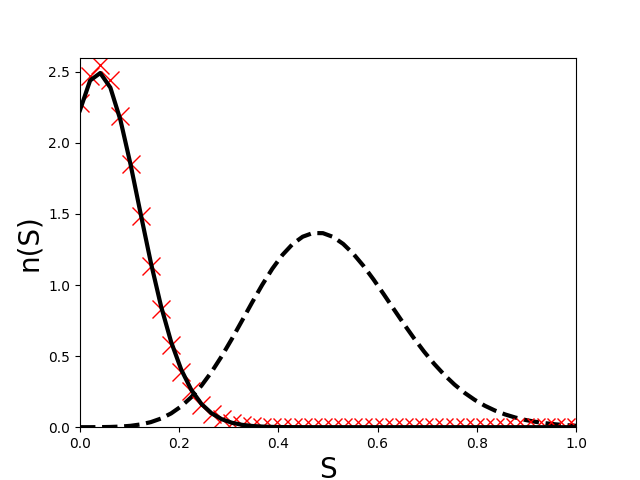}
  \caption{t=0.6} 
  \label{fig:ndf_3_t06}
\end{subfigure}
 \caption{The evolution of the NDF in the case of a linear evaporation rate: initial ME reconstructed solution (dashed line), \NEMO\ algorithm using $\negat=1$ (cross), and exact kinetic solution (solid line), at times $t=0.3$ and $t=0.6$.}
\label{fig:ndf-3}
\end{figure}

\begin{figure}[!htb]
    \vspace{-0.25cm}
    \begin{minipage}{.48\textwidth}
  \includegraphics[width=.98\linewidth]{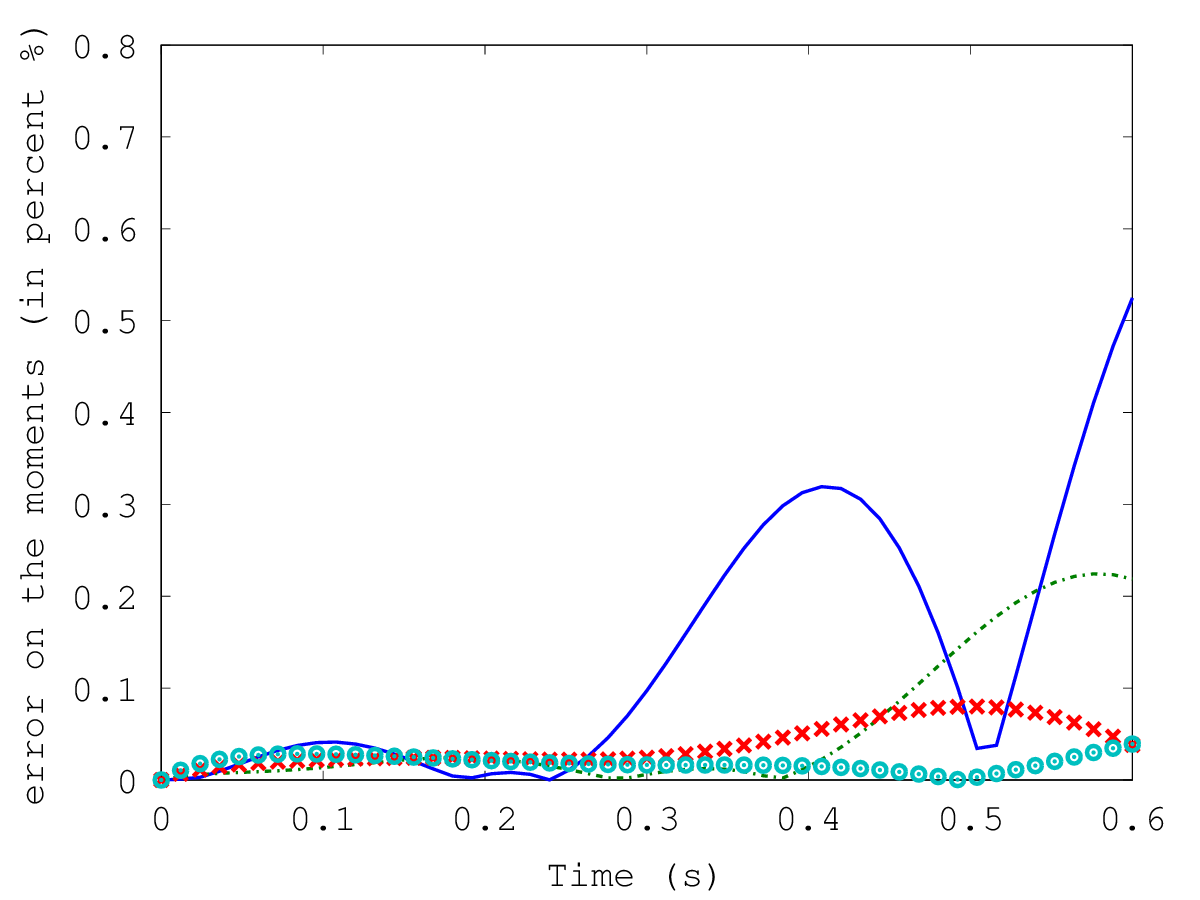}
 \caption{Evolution of the moment errors relatively to their initial value, \NEMO\ algorithm: $\mom_0$ (solid line), $\mom_{1/2}$ (dash-dotted line), $\mom_{1}$ (cross) and $\mom_{3/2}$ (circle).} 
\label{fig:err-3}
    \end{minipage}%
    \hspace{0.22cm}
    \begin{minipage}{0.48\textwidth}
         \includegraphics[width=1.0\linewidth]{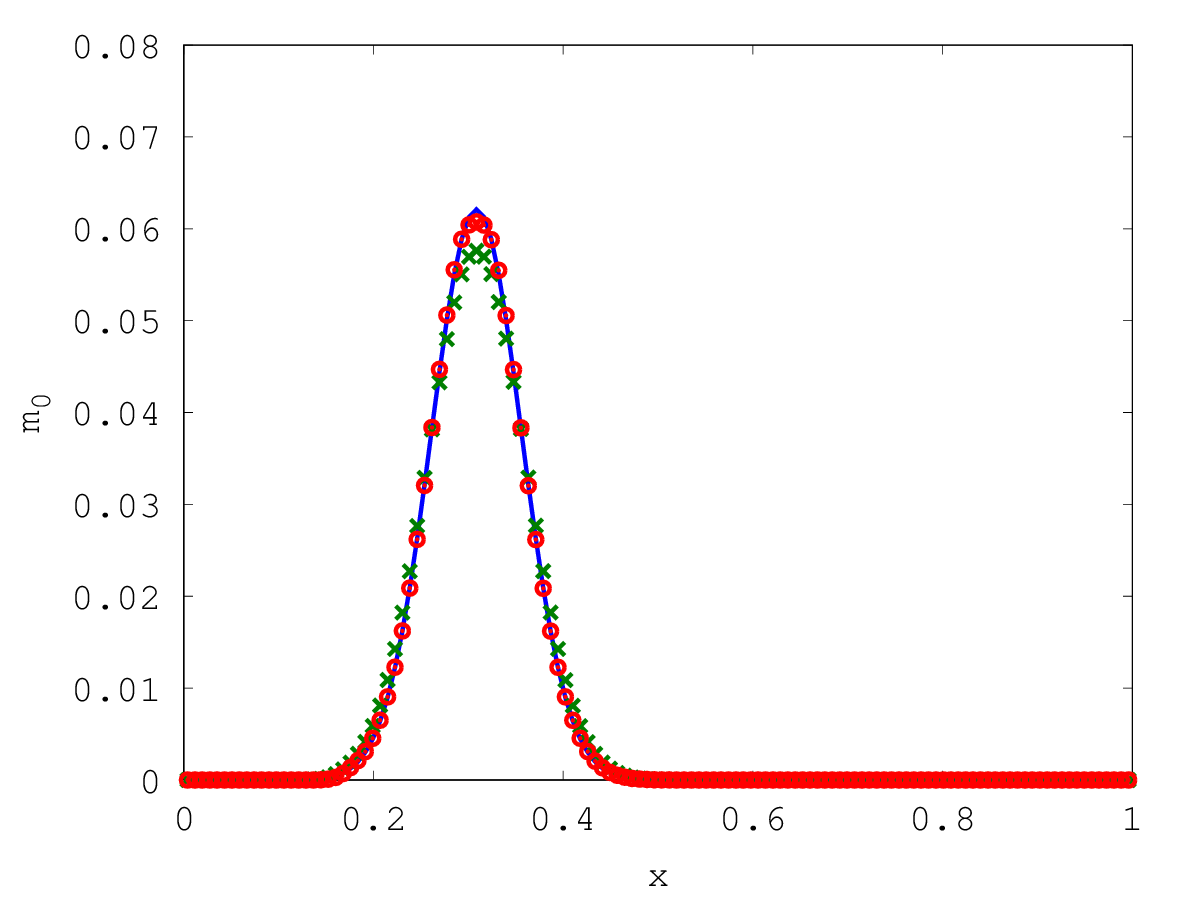}
   \caption{Spatial profile of the moment $\mom_0$ at $t=0.5$ with $128$ grid cells. First order scheme (cross), second order scheme (circle) and exact solution (solid-line).}
        \label{fig:1D_m0}
    \end{minipage}
    \vspace{-0.5cm}
\end{figure}

\subsection{Transport in 1D simulation}
In this part, we investigate the accuracy and robustness of the transport schemes, the first order and the second order detailed in the Appendix \ref{app:Transport-scheme}. The evaporation and the drag force are not considered in this study. First, we investigate the order of accuracy of the transport schemes. Then, we test the two numerical schemes in a critical case, where we generate a $\delta$-shock singularity.
\subsubsection{Accuracy order study}
We consider an initial size distribution with a form depending on the coordinate space $x$. In fact, we need a non trivial initial size-space distribution, such that the slopes used in the second order scheme do not vanish all the time. The chosen initial size distribution has the following profile:
\begin{equation}
n(t,S,x)=10\exp\left(-\frac{(x-x_c)^2}{\sigma_x^2}\right)\exp\left(-\frac{(\sqrt{S}-(1-x)/2)^2}{\sigma_R}\right),
\end{equation}
where $x_c=0.25$, $\sigma_x=0.1$ and $\sigma_R=0.3$. The initial velocity field is initiated as follows:
\begin{equation}
\left\{
\begin{array}{rclc}
u(t=0,x) &=& 0.5-x & x<0.5,\\
u(t=0,x) &=& 0. & x\geq 0.5.
\end{array}
\right.
\end{equation}

Figure \ref{fig:1D_m0} shows the spatial distribution of the moment $\mom_0$ obtained with the first and second order schemes at $t=0.5$. Comparing the two schemes, it is clear that the results obtained with the second order scheme are more accurate than the ones obtained with the first order. In Figure \ref{fig:1D_convergence}, we show the grid convergence for the first and second order to the analytic solution at $t=0.8$. The second order scheme converges faster than the first order scheme to the solution. In the extremum of the moment spatial profile, we can see that the convergence to solution is slow compared to the other points. In fact, the slope limitation in this point is activated to ensure a non oscillating solution, but, it introduces some numerical diffusion.

\begin{figure}[!htb]
 \vspace{-0.5cm}
    \centering
    \begin{minipage}{.5\textwidth}
        \centering
       \includegraphics[width=1.\linewidth]{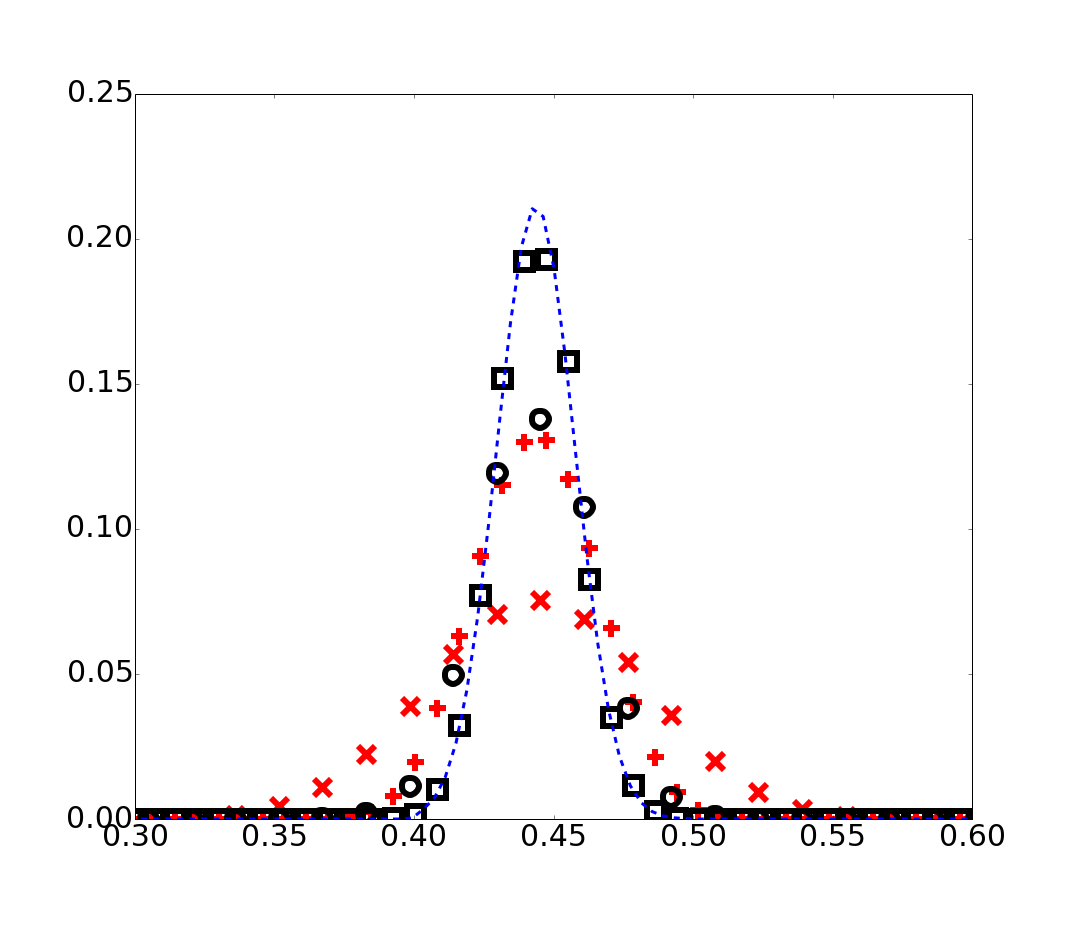}
       \caption{Grid convergence; first order: $64$ cells (cross) and $256$ cells (plus), second order: $64$ cells (circle) and $256$ cells (square). Exact solution in blue dashed-line.}
\label{fig:1D_convergence}
    \end{minipage}%
     \hspace{0.15cm} 
    \begin{minipage}{0.47\textwidth}
        \centering
        \includegraphics[width=1.\linewidth]{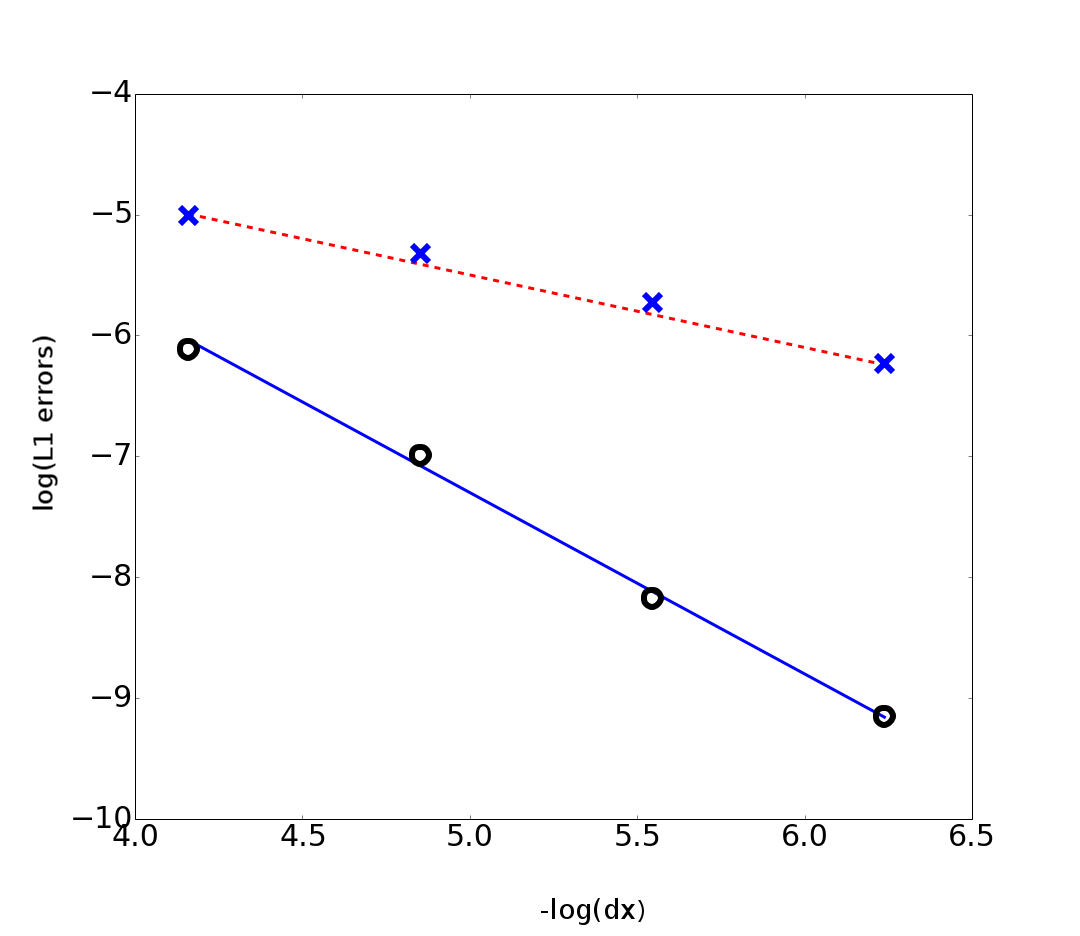}
      \caption{Error curves of $m_0$ with respect to grid refinement in logarithm scale: first order (cross) and second order (circle) schemes.}
        \label{fig:order-curve}
    \end{minipage}
    \vspace{-0.4cm}
\end{figure}

We also compute the $L^1$-error for each numerical scheme depending on the grid size. In Figure \ref{fig:order-curve}, we display the order curve for the two schemes. As it can be seen from these curves, the order of the first order scheme is around $0.6$ and of the second order, around $1.5$. We mention that the order was computed in a critical case. Indeed, the characteristic curves cross at $t=1.0$ and the order of accuracy was evaluated at $t=0.8$. At this time the solution starts to have high gradient variations, thus the observed decrease of the order of accuracy.

\subsubsection{Robustness and capacity of capturing $\delta$-shocks}
The monokinetic assumption is not a valid hypothesis when droplets cross. In such an event, the monokinetic model generates a $\delta$-shock. Despite the non-physical solution, the two kinetic schemes should be able to provide a solution in all critical situations and even resolve the created singularities. In this section, we test the robustness of the numerical schemes in a case of a strong crossing. Initially, we consider two spatial Gaussian distributions centered at two positions and traveling at opposite velocity as shown in Figure \ref{fig:initial_shock}.
\begin{figure} 
\vspace{-0.3cm}
\begin{subfigure}{.48\textwidth}
  \centering
  \includegraphics[width=1.\linewidth]{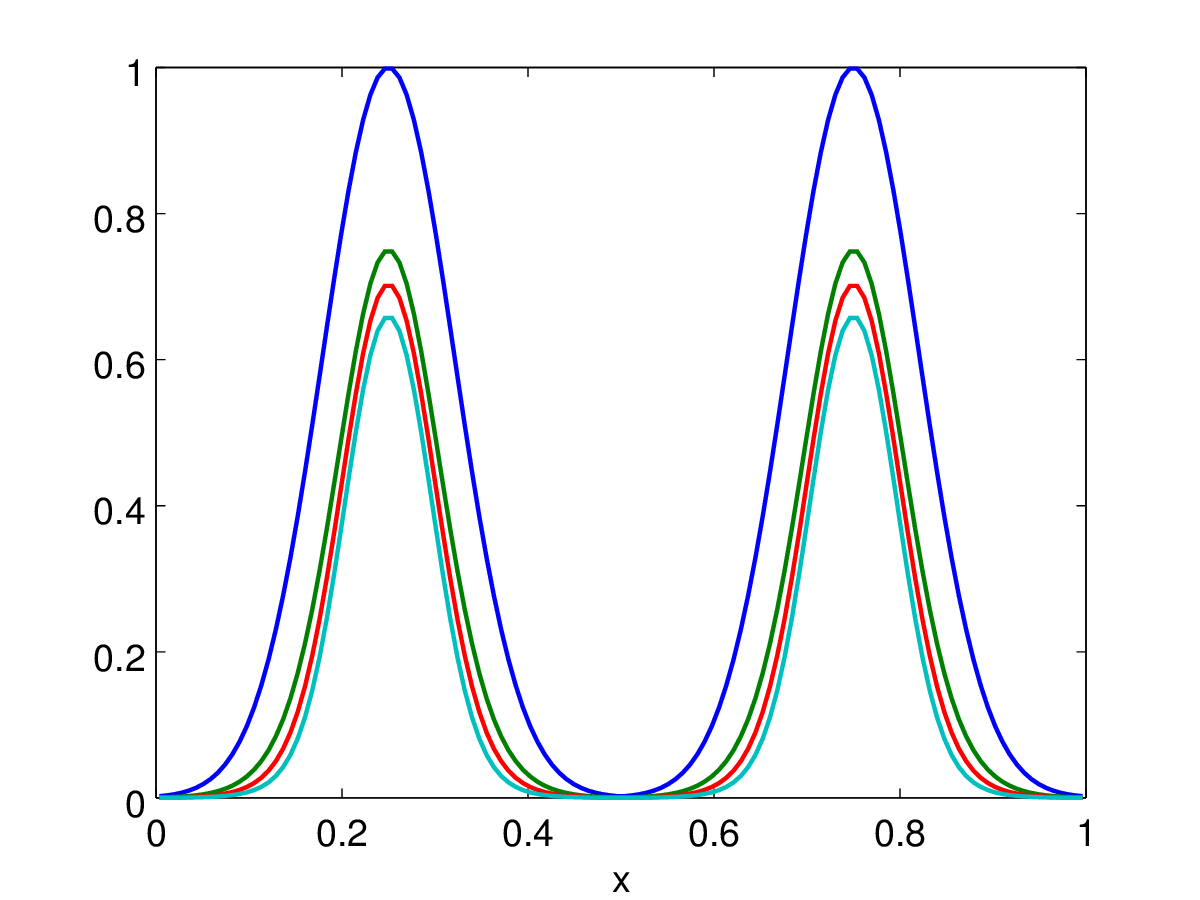}
\end{subfigure}%
\begin{subfigure}{.48\textwidth}
  \includegraphics[width=1.\linewidth]{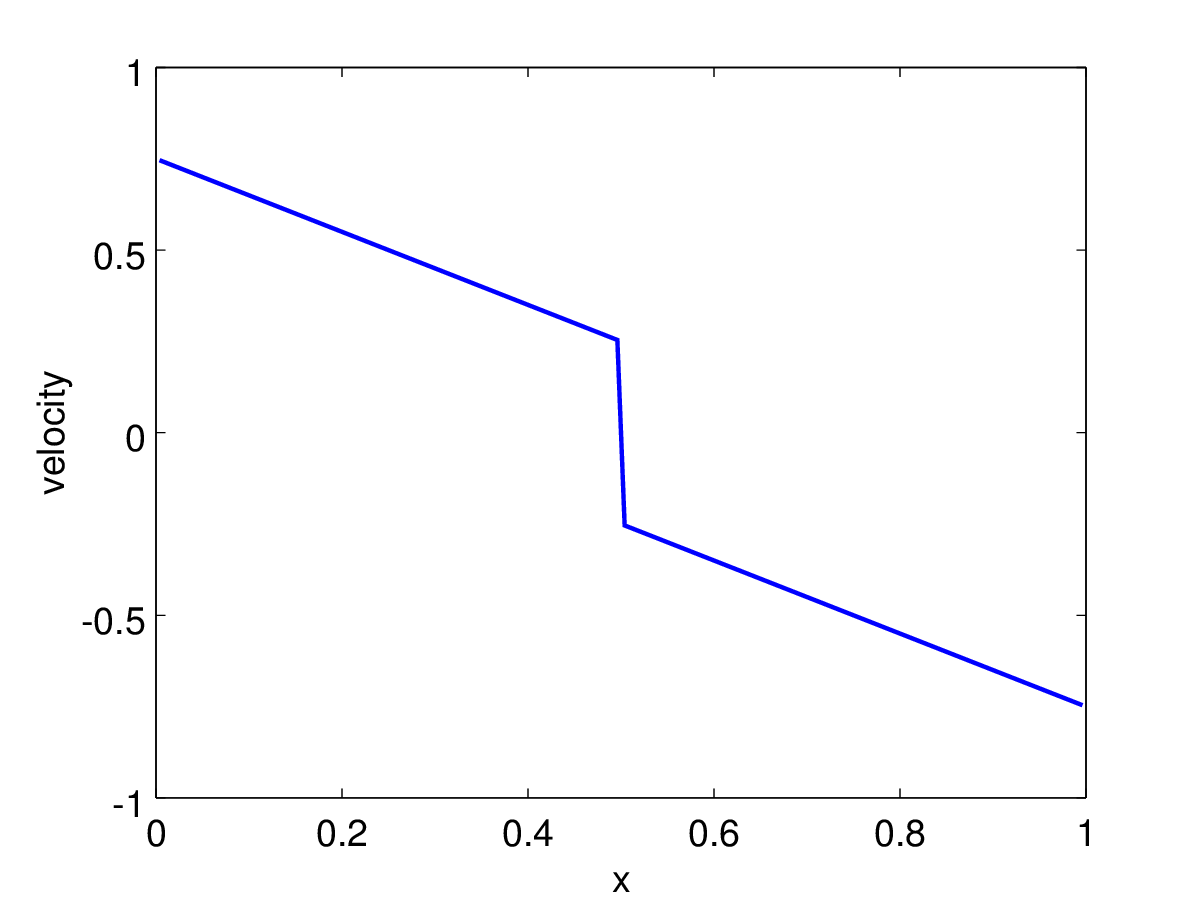}
\end{subfigure}
 \caption{Initial condition for the crossing case. Left: Initial moment fields, the curves represent the moment with decreasing order in terms of value. Right: initial velocity.}
 \vspace{-0.5cm}
\label{fig:initial_shock}
\end{figure}
Figure \ref{fig:shock} shows the spatial profile of the moment $\mom_0$ at two different times. We can see the shock generation when the two packets cross. At the end, the droplets accumulate at the center and the corresponding spatial distribution has the form of a  Dirac delta measure. The computation is still robust in this case and realizability preserved.

\begin{figure}
\begin{subfigure}{.48\textwidth}
  \centering
  \includegraphics[width=.9\linewidth]{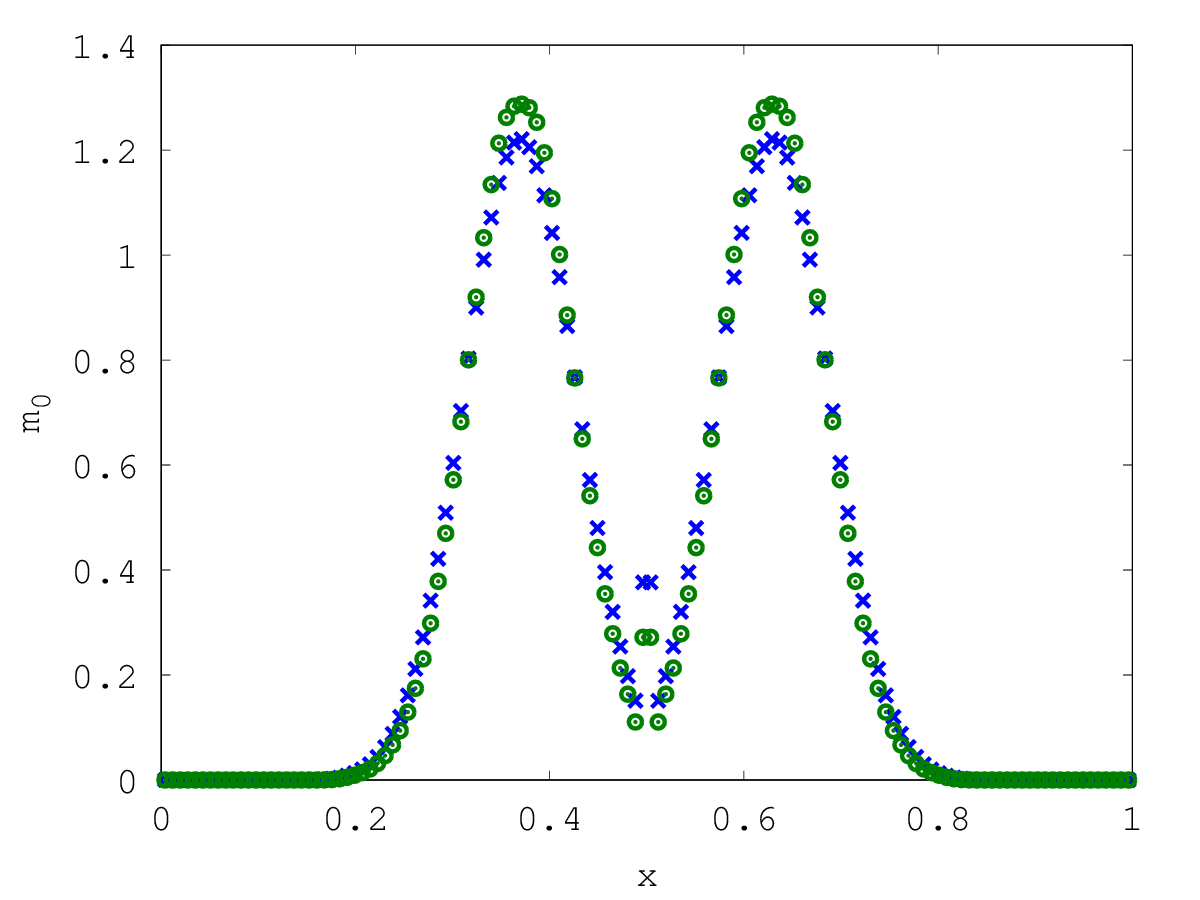}
\end{subfigure}%
\begin{subfigure}{.48\textwidth}
  \includegraphics[width=.9\linewidth]{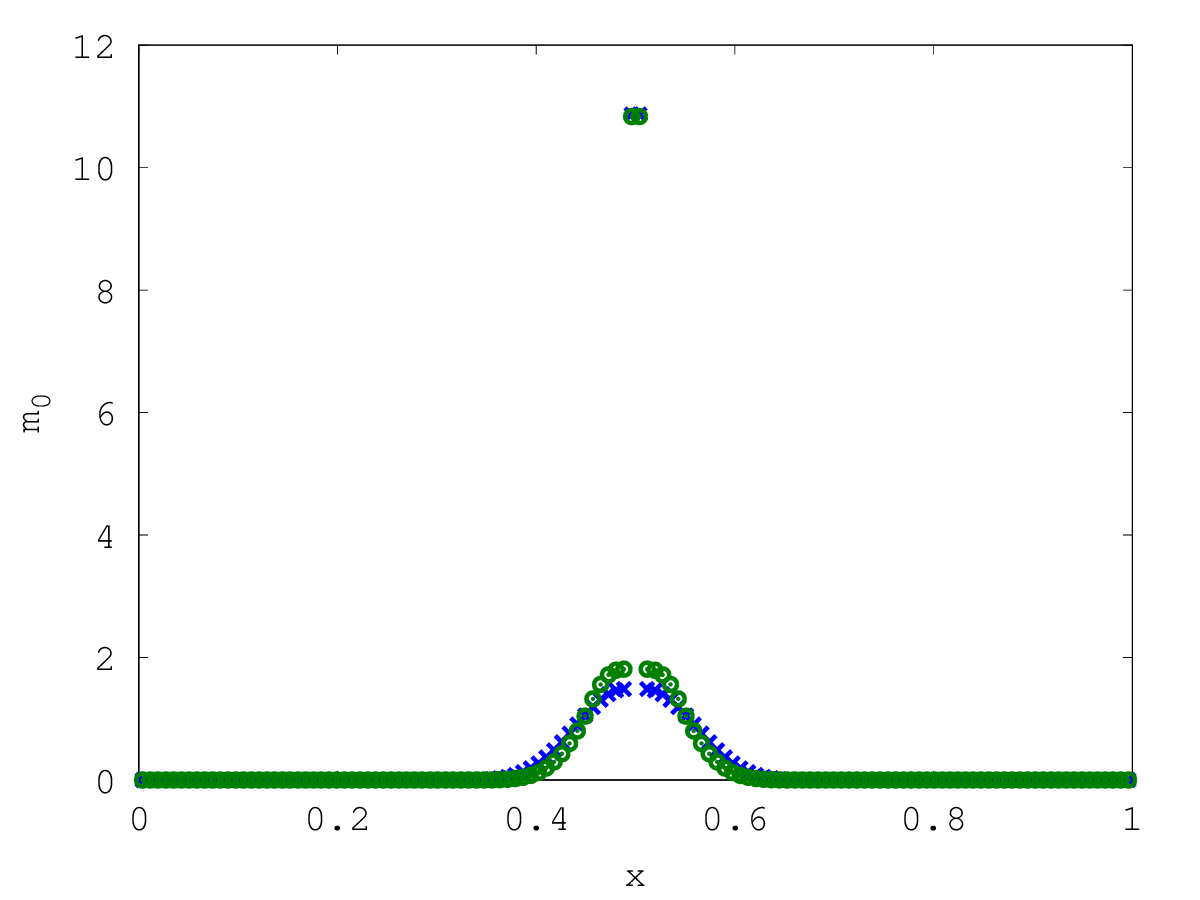}
\end{subfigure}
 \caption{The spatial profile of $\mom_0$ in the crossing case at $t=0.6$ (left) and $t=1.2$ (right), using $128$ cells: first order scheme (cross) and second order scheme (circle).}
 \vspace{-0.5cm}
\label{fig:shock}
\end{figure}

\subsection{2D simulation: Transport, evaporation and the drag force}
After the model and numerical schemes have been tested in 0D and 1D simulations, we propose in this part to compare in a classical 2D configuration the present model and numerical schemes with the EMSM model and its original numerical schemes \cite{massot2010,kah12}, in a case where we consider transport, evaporation and drag. The simulations are performed using the \canop\ code, developed within the collaboration of Maison de la Simulation, IFPEn and EM2C Laboratory. It is based on the 
 \pf\ library \cite{BursteddeWilcoxGhattas11}, a library providing Adaptive Mesh Refinement capability highly scalable in massively parallel computations \cite{drui_PhD,OGST}. 
 In the present simulation, we use only a uniform grid, since we are not concerned with Adaptive Mesh Refinement in the present work. We consider an evaporating spray in the presence of Taylor-Green vortices for the gas, which is a steady solution of the inviscid incompressible Euler equations. The non-dimensional velocity field of the gas reads:
\begin{equation}
u_g(x,y)=\sin(2\pi x)\cos(2\pi y),\qquad
v_g(x,y)=-\cos(2\pi x)\sin(2\pi y),
\end{equation}
where $(x,y)\in[0,1]^2$ and with periodic boundary conditions. Initially, the spray is localized in the bottom-left vortex. The initial spatial size-distribution is given by:
\begin{equation}
\ns(t,\xv,\size)=\mathds{1}_{[a,b]}(\size)\mathds{1}_{\left\{\xv^{\prime},||\xv^{\prime}-\xv_c||_2<\sqrt{2}r\right\}}(\xv)\exp(-||\xv-\xv_c||_2^2/r^2),
\end{equation}
where $[a,b]=[0.25,0.75]$, $\xv_c=(0.15,0.15)$ and $r=0.1$. The initial Stokes number computed according to the mean size $\bar{\size}=\mom_1/\mom_0$ is $\Stokes(\bar{\size})=0.05$, which is close to the critical Stokes number. The Stokes number decreases over time because of evaporation. The spray evaporation rate is $\KEvap=0.5$. 
Figures \ref{fig:EMSMvsFRAC_t05}-\ref{fig:EMSMvsFRAC_t1} present the computed spatial distribution of the volume fraction at two different times, using the EMSM model (left) and the new fractional moment model (right). 
We have used second order scheme for the transport resolution in both cases (see \cite{kah12} for EMSM and \ref{app:Transport-scheme} for the fractional moment model) and the original evaporation algorithm \cite{massot2010} to solve evaporation in the EMSM model and \NEMO\ algorithm with $\negat=1$ for fractional moments. 
For the EMSM model, the volume fraction is not resolved but calculated through ME reconstruction of the size distribution $(1/6\sqrt{\pi})\int_0^1\size^{3/2}\nME(\size)\mathrm{d}\size$. Instead, for the new model, the volume fraction is directly calculated as $(1/6\sqrt{\pi})\mom_{3/2}$. The results of the two computations are closely similar, and the $L^1$-norm difference relatively to the initial volume fraction field is less than $3\%$ at $t=1$. 

Figure \ref{fig:plotoverline} provides a quantitative comparison, where we plot the volume fraction along the segments: 
$\left(x=0.3,\,\,y\in[0,0.5]\right)$ and $\left(x\in[0,0.5],\,\,y=0.3\right)$ at $t=0.5$. 
Let us underline that the EMSM model in this configuration was validated through a detailed comparison with the Multi-fluid model in \cite{kah12} and 
shown to be able to predict the evaporation and the mean dynamics of the spray\footnote{EMSM model and present model, with one size-section, are limited in predicting the size-conditioned dynamics compared with Multi-fluid model with a Stokes number computed from a mean size. The  Correlated Size Velocity Moment model \cite{vieJCP2012} allows to tackle this issue.}. Besides, the computational time was shown to be much shorter for the EMSM model, by a factor almost related to the ratio of the number of PDE solved in the model\footnote{Switching from 30 variables - multi-fluid -  to 6 variables - EMSM -  in 2D resulted in a acceleration factor of more than 4, the maximization of entropy being responsible for not reaching a factor of 5.}.

\begin{figure}
\begin{subfigure}{.48\textwidth}
  \centering
  \includegraphics[width=.84\linewidth]{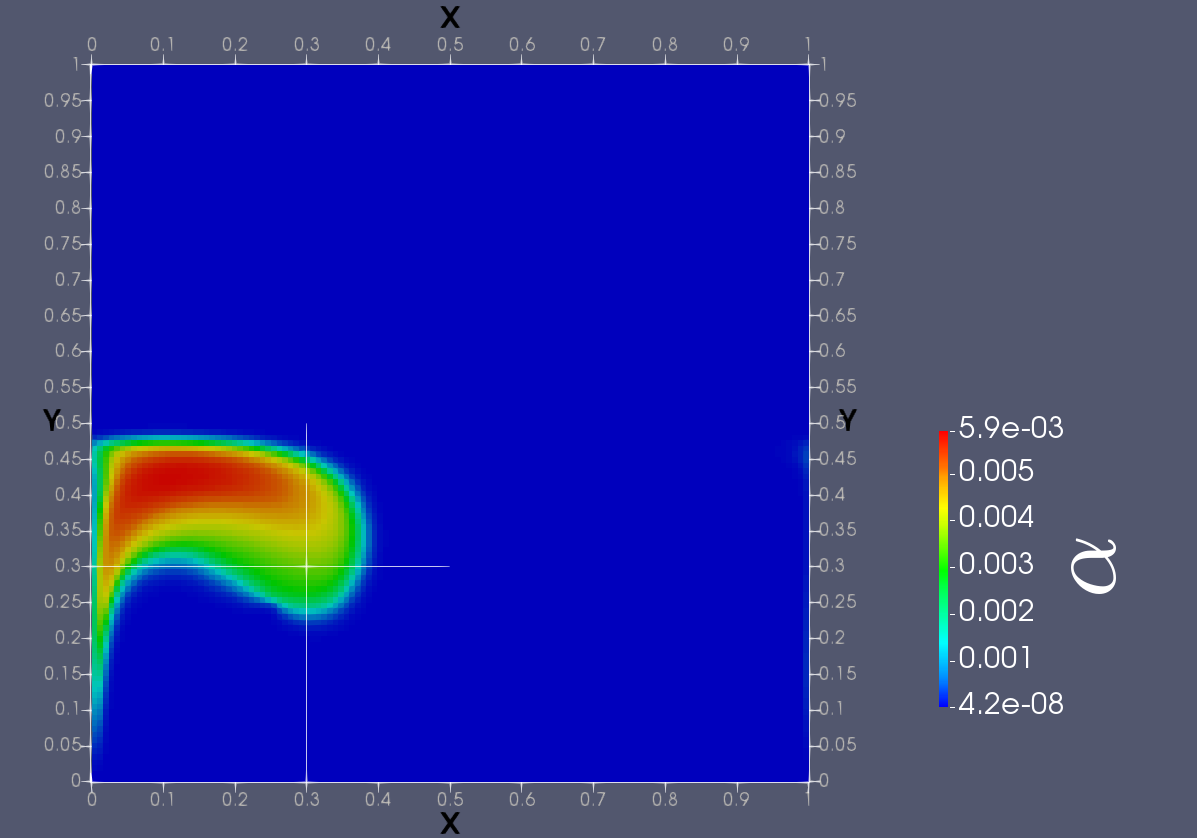}
  \caption{ EMSM } 
  \label{fig:emsm_t05}
\end{subfigure}%
\begin{subfigure}{.48\textwidth}
  \includegraphics[width=.84\linewidth]{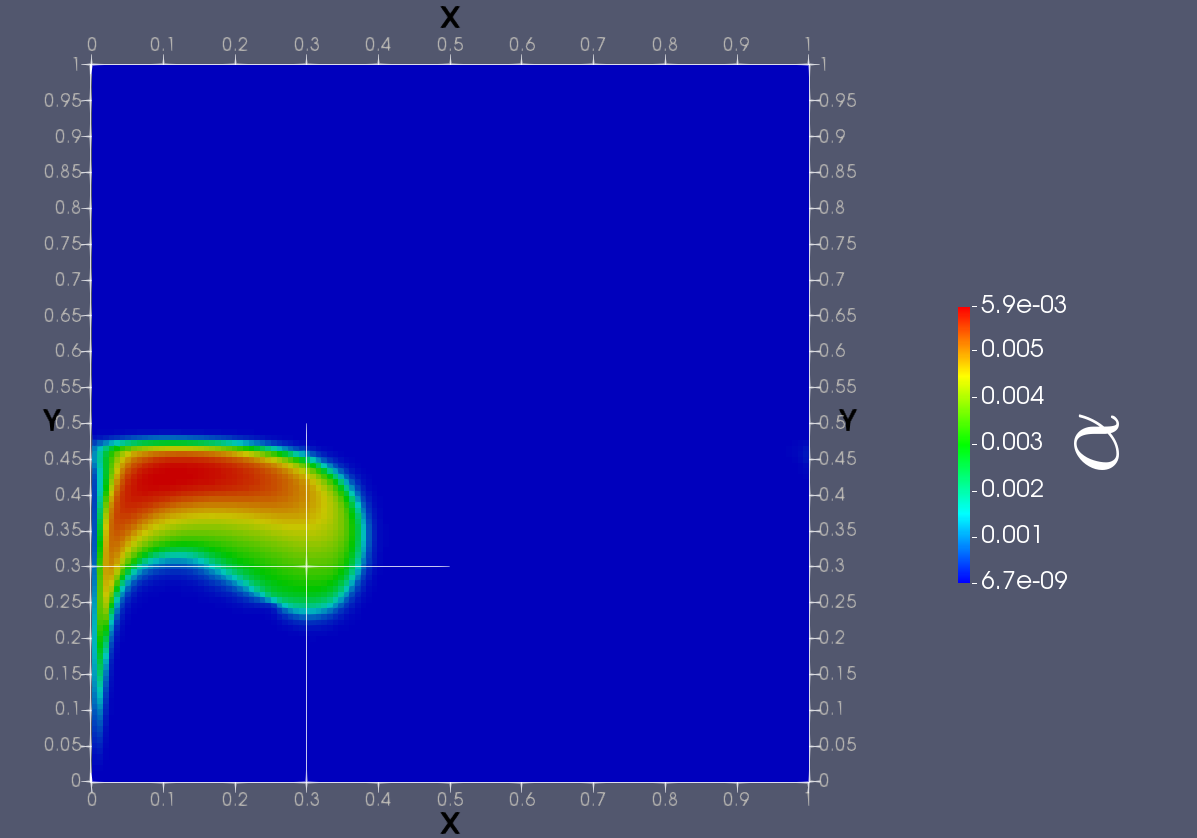}
  \caption{Fractional moments} 
  \label{fig:frac_t05}
\end{subfigure}
 \caption{The spatial distribution of the volume fraction for the Taylor-Green simulation at $t=0.5$. The computation is carried out in a uniform grid $128\times128$.}
\label{fig:EMSMvsFRAC_t05}
 \vspace{-0.5cm}
\end{figure}
\begin{figure}
\begin{subfigure}{.48\textwidth}
  \centering
  \includegraphics[width=.86\linewidth]{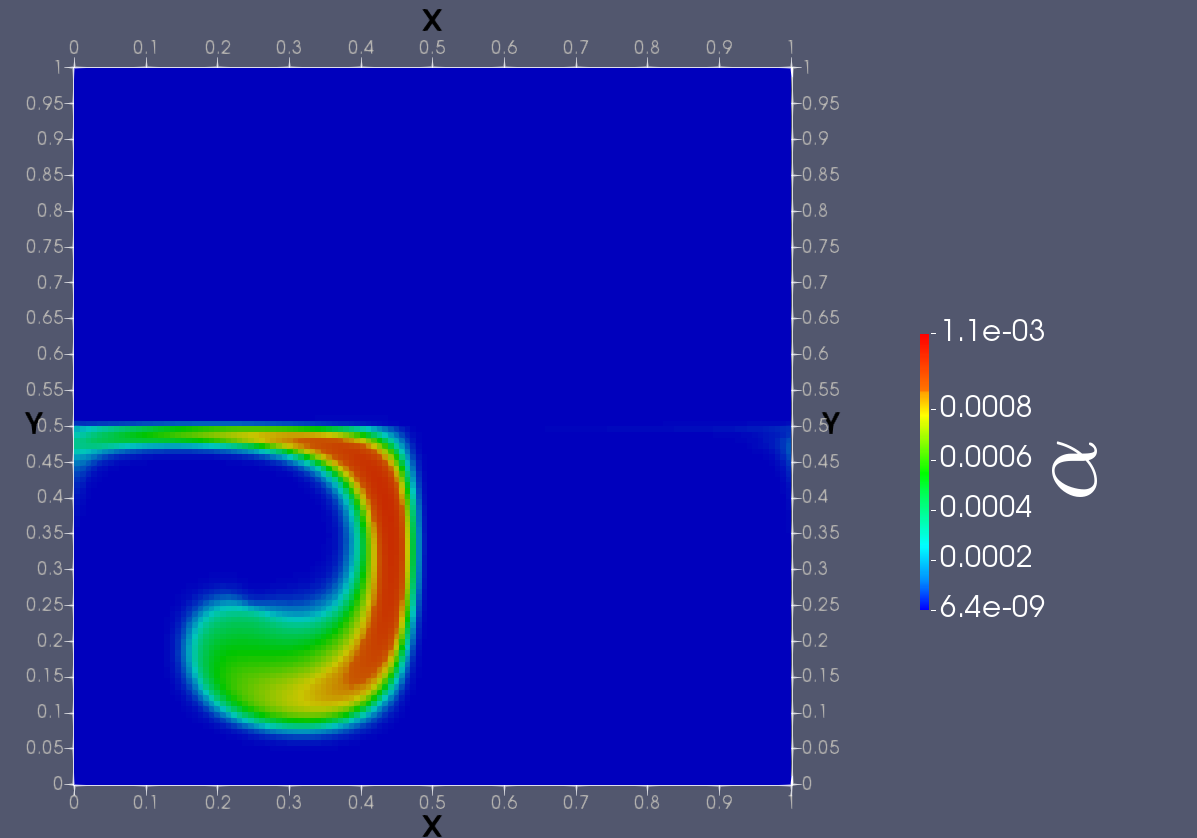}
  \caption{ EMSM } 
  \label{fig:emsm_t1}
\end{subfigure}%
\begin{subfigure}{.48\textwidth}
  \includegraphics[width=.86\linewidth]{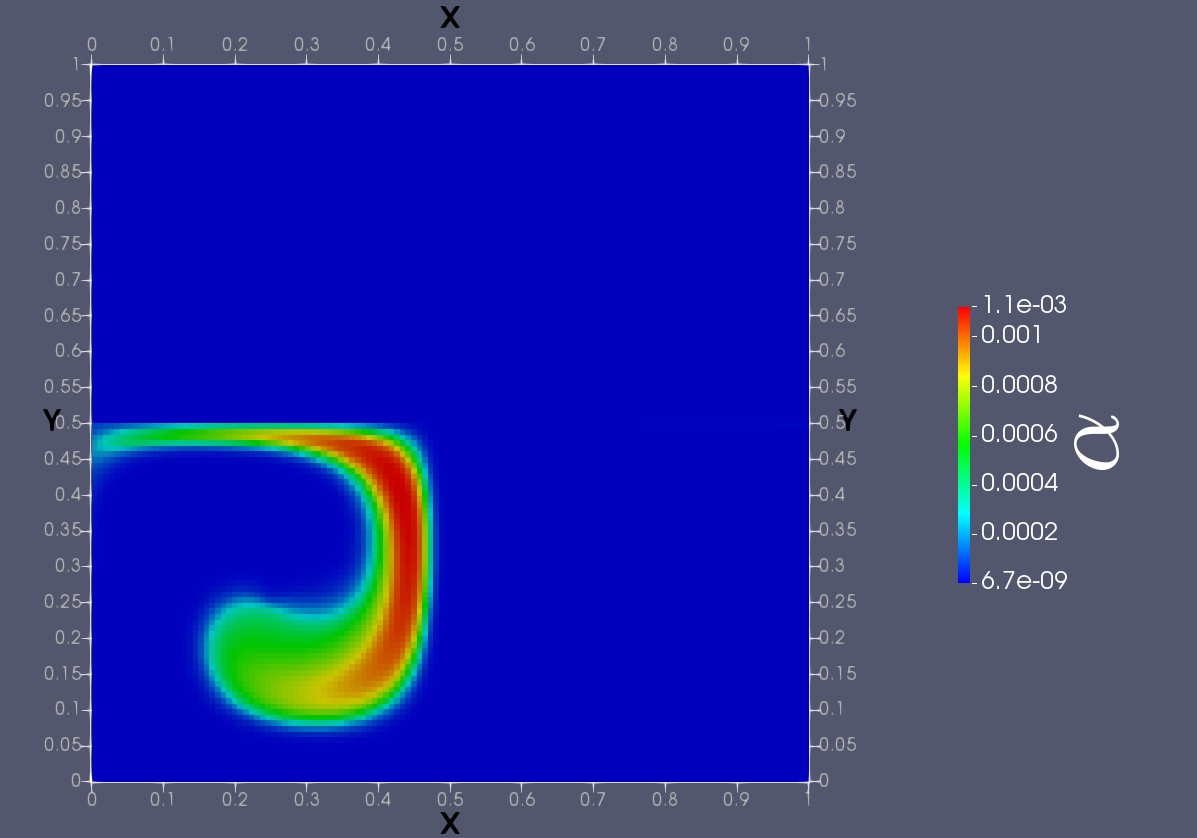}
  \caption{Fractional moments} 
  \label{fig:frac_t1}
\end{subfigure}
 \caption{The spatial distribution of the volume fraction for the Taylor-Green simulation at $t=1.0$. The computation is carried out in a uniform grid $128\times128$.}
\label{fig:EMSMvsFRAC_t1}
 \vspace{-0.6cm}
\end{figure}

Thus we can conclude on two points: 1- the comparison of the two models validates the results of the new model and assesses its ability to capture the dynamics and evaporation of the spray. 2- The new model uses the same number of variables as in the EMSM model, but the computational time in this particular case is 1.8 times faster. In fact the EMSM model requires a supplementary ME-reconstruction  of the NDF to compute the  volume fraction, which is one of the variables in the new model.  Even if we do not claim this number is generic, the new model is however going to be even cheaper in terms of computational time. 
Let us also emphasize that in our numerical strategy, even if a large amount of time can be spent in evaluating the source terms (including maximization of entropy) and evolution in phase space, operator splitting yields an embarrassingly parallel problem for which work stealing and task-based optimization can be extremely efficient.

\begin{figure}
\begin{subfigure}{.48\textwidth}
  \centering
  \includegraphics[width=.9\linewidth]{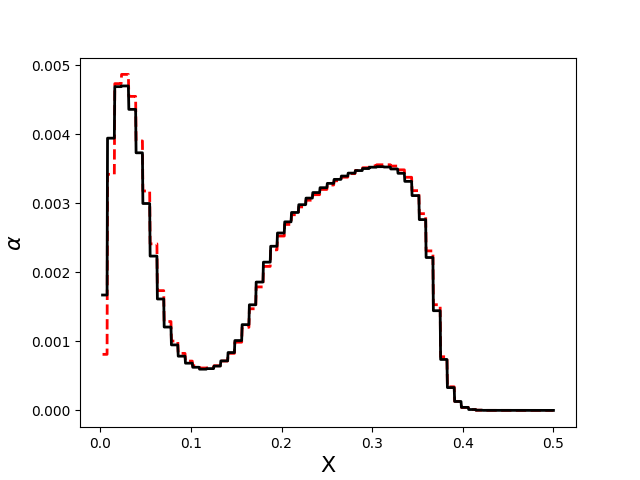}
\end{subfigure}%
\begin{subfigure}{.48\textwidth}
  \includegraphics[width=.9\linewidth]{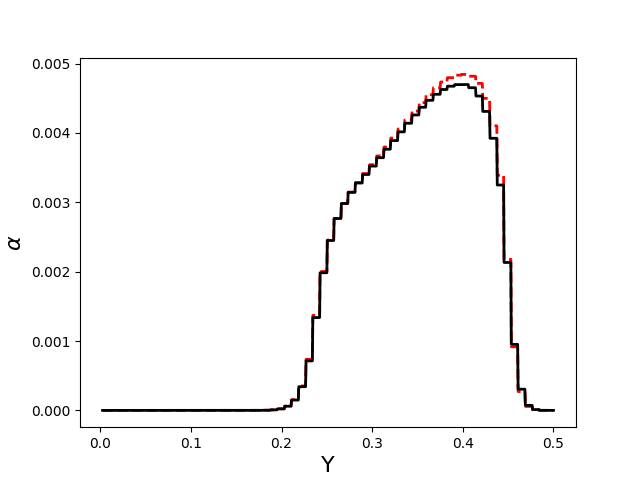}
\end{subfigure}
 \caption{Volume fraction profiles at $t=0.5$ with EMSM (dashed red line) and fractional moment model (solid black line) over the segments: $x\in[0.,0.5]$ and $y=0.3$ (left) and $x=0.3$ and $y\in[0.,0.5]$. }
\label{fig:plotoverline}
\end{figure}


\section{Conclusions}
\label{sec:conclusions}

In the present paper, we have proposed a novel Eulerian high order size moment model and numerical methods in order to describe 
polydisperse evaporating sprays, with the same accuracy and robustness based on built-in realizability preserving numerical schemes as previously developed methods. Its main originality in terms of modeling is to involve a set of variables, which can be interpreted physically as interface geometry quantities and are thus easier to couple to other zones of the flow, where the spray is denser or even in the separated-phases region.
Since the new model involves fractional order moments, we have 
 tackled the various mathematical difficulties related to  ME reconstruction and moment space theory \cite{dette97,lasserre2010} and proposed a new precise and robust numerical method, within the framework of operator splitting techniques \cite{doisneau14,descombes14} to separate the resolution of transport in physical space and phase space, for a model involving high order fractional size-moments as well as negative order moments. The way of treating negative order moments properly should be useful for various other problems in the several communities \cite{Frenklach2002}. The main difficulty is to design accurate, robust and realizable numerical schemes, especially for the resolution of the evaporation and transport, where standard solvers fail. 
 
Whereas the basis of the approach has been presented in a simplified modeling framework in order to focus on its applied mathematical aspects and for the sake of clarity, the extension to realistic models and configurations can be achieved. Detailed droplet models can be used in order to cope with predictive simulations, as well as two-way coupling and mesh motion, in internal combustion engines for IFPEn \cite{kah2015}. Higher order velocity moment methods, conditioned on droplet size and based on the Levermore hierarchy of hyperbolic system of conservation laws with an entropic structure \cite{levermore95,levermore96,sabat2016} allow to cope with turbulence and particle trajectory crossing. Implementation in the massively parallel code able to handle rather complex geometries has been accomplished \cite{drui_PhD} and the use of the present model assessed in a companion paper  \cite{OGST}, which rather focuses on implementation issues, parallelism and results. It shows the potential of the present model for realistic engine simulations.
 
However, the main challenge still remains the elaboration and design of a unified model for the liquid injection able to predict atomization as well as the generated polydisperse spray and its dynamics and evaporation \cite{essadki2018}, for which the present contribution offers some key ingredients.


\section*{Acknowledgments}
This research was supported by a PhD grant for M.
Essadki from IFPEn, as well as by Ecole Doctorale de Math\'ematiques Hadamard and EM2C. We want to thank A. Larat and A. Vi\'e for several interesting and helpful discussions. 

\appendix

\section{Introduction}

In the main paper, a set of fractional moments are used to describe polydisperse evaporating sprays and correspond 
to physically identified interface geometrical quantities.
In this Appendix, we present some mathematical elements related to the use of these fractional moments.
These elements are already known for integer moments and can be extended  in our case.
First, the fractional moment space is defined and the notion of canonical moments and quadrature is directly extended from the integer moments case.
Then, a complete proof is given for the existence and uniqueness of the Maximum Entropy reconstruction from any moment vector in the interior of the fractional moment space, as well as an algorithm to compute the parameters of this reconstruction.
Finally, using the canonical moments, the extension of the realizable transport scheme developed for integer moments in \cite{kah12} is provided.

\section{Fractional moment space, canonical moments and quadrature}
\label{appendix:fractional_moment_space}
The purpose of this part is to extend some definition and relevant properties of the integer moment space to fractional moment space, in order to provide necessary tools to characterize the geometry of this space and thus to design realizable numerical schemes. 
In the following, we use the normalized fractional moments $\cmom_{k/2}=\mom_{k/2}/\mom_0$.

\subsection{Fractional moment space - link with the integer moment space}
\begin{definition}
We define the Nth fractional moment space as follows:
\begin{equation*}
\Momsp{N}([0,1])=\left\{\CMom_N(\mu), \mu \in \Pset([0,1])\right\}, \hspace{0.1cm} \CMom_N(\mu)=(\cmom_0(\mu),\cmom_{1/2}(\mu),\hdots,\cmom_{N/2}(\mu))^t,
\end{equation*}
where $\Pset([0,1])$ denotes the set of all probability density measures defined on the interval $[0,1]$ and $\cmom_{k/2}=\int_0^1x^{k/2}d\mu(x)$.
\end{definition}

The fractional moments can be expressed as integer moments by using the following mapping $r^2=x$ for $x\in[0,1]$.
Let us then denote $\widetilde{\mu}$ the measure on the interval $[0,1]$, which is in a one to one correspondence with the measure $\mu$ on $[0,1]$ through the above transformation, that is: $\widetilde{\mu}([0,r])=\mu([0,r^2])$.
The moments can then be written:
\begin{equation}
\cmom_{k/2}(\mu)=\int_0^1x^{k/2}d\mu(x)
=\int_{0}^{1} r^kd\widetilde{\mu}(r).
\label{identif:integer-frac-mom}
\end{equation}
This relation expresses an identification between the fractional moment $\cmom_{k/2}(\mu)$ of the measure $\mu$ and the integer moment
$\widetilde{\cmom}_k=\cmom_k(\widetilde{\mu})$ of the measure $\widetilde{\mu}$. 
In the following, we use these notations to differentiate between the two natures of the moments, even if they are equal. With this simple identification, we will take benefit from the already existing results of integer moment space to extend them to the case of fractional moment space.

\subsection{Canonical moments}

For a fractional moment vector $\CMom_N\in\Momsp{N}([0,1])$, we denote by $\Pset_N(\CMom_N)$ the set of all measure $\mu \in \Pset([0,1])$, which are the solution of the following finite Hausdorff moment problem:
\begin{equation}
\cmom_{k/2}=\int_0^1x^{k/2}d\mu(x),\quad k= 0,1,\dots,N.
\end{equation}
If $\CMom_N=(\cmom_0,\hdots,\cmom_{N/2})^t$ belongs to the interior of $\Momsp{N}([0,1])$, we can show that the set $\Pset_N(\CMom_N)$ is infinite. Indeed, these results were shown in \cite{dette97} for integer moments, and its generalization for fractional is straightforward through the identification \eqref{identif:integer-frac-mom}.  Furthermore, the set of $\cmom_{(N+1)/2}(\mu)$, where $\mu \in \Pset(\CMom_N)$, is infinite and the maximum and the minimum of this set are different:

\begin{equation}
\cmom^-_{(N+1)/2}(\CMom_{N})=\underset{\mu\in\Pset(\CMom_N)}{\min}\left\{\cmom_{(N+1)/2}(\mu)\right\},\quad\cmom^+_{(N+1)/2}(\CMom_{N})=\underset{\mu\in\Pset(\CMom_N}{\max}\left\{\cmom_{(N+1)/2}(\mu)\right\}.
\end{equation}
 We define the canonical fractional moments as follows:
\begin{equation}
p_k=\dfrac{\cmom_{k/2}-\cmom^-_{k/2}(\CMom_{k-1})}{\cmom^+_{k/2}(\CMom_{k-1})-\cmom^-_{k/2}(\CMom_{k-1})}.
\end{equation}
The canonical moment vector belongs to $[0,1]^N$, which has a simple geometry compared to the moment space. By using the results obtained on canonical integer moments \cite{dette97}, we can write the algebraic relation between fractional moments and their corresponding canonical moments, in the case of $N=3$, by using the identification \eqref{identif:integer-frac-mom}:
\begin{equation}
  p_1=\dfrac{\mom_{1/2}}{\mom_0},\hspace{0.2cm}
  p_2=\dfrac{\mom_0\mom_1-\mom_{1/2}^2}{(\mom_0-\mom_{1/2})\mom_{1/2})},\hspace{0.2cm}
  p_3=\dfrac{(\mom_0-\mom_{1/2})(\mom_{1/2}\mom_{3/2}-\mom_1^2)}{(\mom_0\mom_1-\mom_{1/2}^2)
(\mom_{1/2}-\mom_1)}.
\end{equation}

\subsection{Principal representations - Gauss quadrature}

For a moment vector $\CMom_N$ in the interior of $\Momsp{N}$, the vector $\CMom^{\pm}_{N+1}=(\mom_0,\hdots,\mom_{N/2},\cmom^{\pm}_{(N+1)/2}(\CMom_N))$ belongs to the boundary of the moment space $\Momsp{N+1}$, and the measure set $\Pset_{N+1}(\CMom^{\pm}_{N+1})=\{\mu_{\pm}\}$ has only one element. 
The measure $\mu_+$ (resp. $\mu_-$) is called the upper (resp. lower) principal representation. In the case of integer moment vector $\widetilde{\tens{\cmom}}_N$, it was shown \cite{dette97} that the lower and upper principal representations ($\widetilde{\mu}_+$ and $\widetilde{\mu}_-$) can be expressed as sum of $n_s\leq (N+1)/2$ weighted delta-Dirac functions (we count the abscissas in the interior $]0,1[$ by $1$ and the ones in the extremity, $0$ or $1$, by $1/2$):
\begin{equation}
\widetilde{\mu}_{\pm}(r)=\sum\limits_{i=1}^{n_s}\widetilde{w}^{\pm}_i\delta_{r^{\pm}_i}(r).
\end{equation}
We recall that the subscript $\widetilde{\bullet}$ is used for integer moments and their corresponding measure, which are related to the fractional moments according to the identification \eqref{identif:integer-frac-mom}. 

In the case where $N$ is odd (corresponding to an even number of moments), the lower principal representation corresponds to the classical Gauss quadrature.
Let us remove the superscripts $-$ to simplify the notations.
Then, the weights $\widetilde{w}_i$ and the abscissas $r_i$ (which are in ]0,1[) can be computed from the moments $\widetilde{\tens{\cmom}}_N$, for example with the Product-Difference algorithm \cite{gordon1968}.
In other words, this algorithm (efficiently) solves the following non-linear system:
\begin{equation}
\widetilde{\cmom}_k=\sum\limits_{i=1}^{n_s}\widetilde{w}_ir_i^k,\hspace{0.3cm} k=0,1,\hdots,N.
\end{equation}
The identification \eqref{identif:integer-frac-mom} allows us to relate the quadrature in terms of fractional moments, to the quadrature in terms of integer moments: 
\begin{equation}
\cmom_{k/2} = \sum\limits_{i=1}^{n_s}{w}_ix_i^{k/2},\hspace{0.3cm} k=0,1,\hdots,N,
\label{eq:low_prin_frac}
\end{equation} 
where $x_i=(r_i)^2$ and $w_i=\tilde{w}_i$.
Thus, the Product-Difference algorithm allows to determine the weights and abscissas of the quadrature for fractional moments.

\section{Reconstruction through Entropy Maximization}
\label{SM:section_ME}
The Maximum Entropy (ME) problem reads as follows:
\begin{equation}
 \max\left\{ H[n]\!=\!-\!\int_0^1\!\ns(\size)\ln(\ns(\size))d\size\right\},\quad
 \mom_{k/2}\!=\!\int_0^1\!\size^{k/2}\ns(\size)d\size,\quad k\!=\!0,\hdots, N,
\label{SMeq:ME_optimization_problem}
\end{equation}
The existence and uniqueness of the ME solution is first shown, before giving an algorithm to compute the parameters of such a reconstruction.

\subsection{Existence and uniqueness of the maximum entropy solution}
\label{SM:ME-existence and uniqueness}

The following Theorem is shown here.
\begin{theorem}
If the vector $\Mom_{N}=(\mom_0,\mom_{1/2},\dots,\mom_{(N)/2})$ belongs to the interior of the Nth fractional moment space, then the constrained optimization \eqref{SMeq:ME_optimization_problem} problem admits a unique solution, which is in the following form:
\begin{equation}
\nME(\size)=\exp\left(-\lambda_0-\sum\limits_{i=1}^{N}\lambda_i\size^{i/2}\right).
\end{equation}
\end{theorem}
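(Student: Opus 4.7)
The plan is to glue the two preceding lemmas together with a Gibbs inequality argument, rather than attacking the constrained optimization directly. Lemma 1 already reduced the problem to finding Lagrange multipliers $\lambEM$ satisfying the moment constraints, and it gave uniqueness of such multipliers via the positive-definite Hessian of the potential $G$. Lemma 2 then established that $G$ is continuous on $\mathbb{R}^{N+1}$ and that $G(\lambdaVec)\to+\infty$ as $||\lambdaVec||\to+\infty$. The first step I would take is therefore purely convex-analytic: a continuous, strictly convex, coercive function on $\mathbb{R}^{N+1}$ attains its infimum at a unique point $\lambEM$. Writing the stationarity conditions $\partial G/\partial\lambda_k=0$ at $\lambEM$ then reproduces exactly the moment equations of \eqref{SysME}, so the associated density $\nME(\size)=\exp(-\lambda_0^{ME}-\sum_{i=1}^N\lambda_i^{ME}\size^{i/2})$ is a positive, admissible density that realizes $\Mom_N$.

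The second step is to promote this critical point of the Lagrangian to the global maximizer of the Shannon entropy under the moment constraints, which is what the statement actually requires. I would do this via a Kullback--Leibler (Gibbs) argument: for any non-negative $\ns$ on $[0,1]$ sharing the $N+1$ fractional moments of $\nME$, the identity $\ln\nME(\size)=-\lambda_0^{ME}-\sum_i\lambda_i^{ME}\size^{i/2}$ collapses $-\int_0^1\ns(\size)\ln\nME(\size)\dd\size$ to $\lambda_0^{ME}\mom_0+\sum_{i=1}^N\lambda_i^{ME}\mom_{i/2}$, and exactly the same expression is obtained with $\nME$ in place of $\ns$. Rearranging gives $H[\ns]-H[\nME]=-\int_0^1\ns(\size)\ln(\ns(\size)/\nME(\size))\dd\size\le 0$ by the Gibbs inequality, with equality only if $\ns=\nME$ almost everywhere. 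This delivers existence of the maximizer, its uniqueness, and its exponential form in one shot.

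The delicate point, and the one I would spend most care on, is securing the coercivity of $G$, because this is where the interior hypothesis on $\Mom_N$ really bites. The argument of Lemma 2 implicitly used the existence of some non-negative $f$ realizing $\Mom_N$ in order to lower-bound the divergent term $\lambda^{(n)}\int Q^{(n)}f\dd\size$; such an $f$ exists precisely because $\Mom_N$ lies strictly inside the fractional moment space characterized in Appendix \ref{appendix:fractional_moment_space}. On the boundary of that space, $G$ ceases to be coercive, some components of $\lambEM$ blow up, and the entropy maximizer degenerates into a convex combination of Dirac masses, so no exponential representative exists and the conclusion fails. Beyond this boundary obstruction the remaining work is routine: positive-definiteness of the Hessian of $G$ is a Cauchy--Schwarz identity on the moments of $\nME$, and the Gibbs inequality is textbook, so I would not belabour either computation.
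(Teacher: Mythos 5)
Your proposal is correct, and its skeleton coincides with the paper's: the paper's proof of the theorem is literally the single sentence ``the proof is straightforward by using the two last lemmas,'' i.e.\ it combines the coercivity and continuity of $G$ (second lemma) with the strict convexity and the characterization of critical points (first lemma) to obtain a unique minimizer $\lambEM$ of $G$ whose stationarity conditions are the moment constraints. Where you genuinely go beyond the paper is in your second step. The first lemma only establishes the implication ``if the constrained entropy problem has a solution, it is unique and of exponential form''; the existence of a unique critical point of $G$ produces an admissible exponential density realizing $\Mom_N$, but it does not by itself certify that this density is the global maximizer of $H$ over \emph{all} nonnegative densities satisfying the constraints. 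Your Gibbs--Kullback--Leibler argument --- using that $\ln\nME$ is a linear combination of the constrained monomials $\size^{i/2}$, so that $\int_0^1 \ns\ln\nME\,\dd\size=\int_0^1 \nME\ln\nME\,\dd\size$ for any admissible $\ns$, whence $H[\ns]-H[\nME]=-\int_0^1 \ns\ln(\ns/\nME)\,\dd\size\le 0$ with equality only for $\ns=\nME$ a.e.\ --- closes exactly this gap, and simultaneously re-derives uniqueness without appealing to the Hessian computation. Your remark on where the interior hypothesis enters (the existence of a realizing density $f$ used to control the linear term in the coercivity estimate, and the degeneration to principal representations on the boundary) is also accurate and consistent with the discussion of the moment space in the appendix. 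In short: same route as the paper for existence of the candidate, plus a necessary verification step that the paper leaves implicit.
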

We mention that the case of the integer moments has been already treated in \cite{mead84}. 
Here, some ideas of this work are used, but the present proof is completely different and simplified. 
Indeed, Mead and Papanicolaou \cite{mead84} have used the monotonic properties of the moments, which is a characterization of the integer moment space, to prove existence of the ME solution. 
In our case, only the definition of the fractional moment space is used.
The proof is then done through two Lemmas.

\subsubsection{Uniqueness}
The first Lemma allows to show the uniqueness and to give the form of the ME reconstruction.
\begin{lemma}
If the constrained optimization problem \eqref{SMeq:ME_optimization_problem} admits a solution, then this solution is unique and can be written in the following form:
\begin{equation}
\nME(\size)=\exp\left(-\lambda_0-\sum\limits_{i=1}^{N}\lambda_i\size^{i/2}\right),
\end{equation}
where $\lambdaVec=(\lambda_0,..,\lambda_{N})^t\in\mathbb{R}^N$.
Moreover, this problem is equivalent to find an optimum of the potential function:
\begin{equation}
 G(\lambda_0,\hdots,\lambda_{N})=\int_{0}^{1}\exp(-\lambda_0-\sum_{i=1}^{N}\lambda_iS^{i/2})dS+\sum_{k=0}^{N-1}\lambda_{k}
\mom_{k/2}.
\label{SM-eq:Potential_function}
\end{equation}
\end{lemma}

\begin{proof}
The Lagrangian function associated to this standard constrained optimization problem is:\\
\begin{equation}
 L(\ns,\lambdaVec)=H[n]-(\lambda_0-1) 
\left(\int_0^1\ns(s)ds-\mom_{0}\right)-\sum\limits_{i=1}^{N}\left[\lambda_i 
\left(\int_0^1s^{i/2}\ns(s)ds-\mom_{i/2}\right)\right],
\end{equation}
where $\lambdaVec=(\lambda_0,..,\lambda_{N})$ is the vector of the Lagrange's multipliers.\\
Let us suppose that, for a given moment vector $\Mom_N$, there exists a density function $\nME$ which is the solution of the ME problem \eqref{SMeq:ME_optimization_problem}. So, there exists a 
vector $\lambEM$ for which the differential of the Lagrange function $ L(n,\lambdaVec)$ 
at the point $(\nME,\lambEM)$ vanishes:
\begin{equation}
\left\{
\begin{array}{rcl}
 DL(\nME,\lambdaVec)\cdot(h,\tens{0})&=&\int_0^1h(s)\left[-\ln(\nME(s))-\sum\limits_{i=0}^{N}\lambda_is^{i/2}\right]ds=0,\\
\frac{\partial L}{\partial \lambda_i}(\nME,\lambdaVec)&=&\int_0^1s^{i/2}\nME(s)ds- 
\mom_{i/2}=0,
\end{array}\right. ,
\label{eq:Existence-Lang}
\end{equation}
where $h$ is a positive distribution. Since the system \eqref{eq:Existence-Lang} is valid for all $h$, it yields:
\begin{equation}
\left\{
\begin{array}{rcl}
 \nME(\size)&=&\exp(-\lambda_0-\sum\limits_{i=1}^{N}\lambda_i\size^{i/2}),\\
\mom_{k/2}&=&\int_0^1s^{k/2}\exp(-\lambda_0-\sum\limits_{i=1}^{N}\lambda_is^{i/2})ds.
\end{array}\right.
\label{SysME}
\end{equation}
The problem then consists in finding a vector $\lambdaVec=(\lambda_0,..,\lambda_{N})$ in $\mathbb{R}^{N}$ which satisfies the moment equations in the system (\ref{SysME}).
This problem is equivalent to find an optimum of the potential function $G(\lambda_0,..,\lambda_{N})$ defined in \eqref{SM-eq:Potential_function}.

The Hessian matrix $H$ defined by $H_{i,j}=\frac{\partial^2 G}{\partial \lambda_i \partial \lambda_j}$ is a positive definite matrix, which ensures uniqueness of an eventual existing solution.
\end{proof}

\subsubsection{Existence}
The existence of such reconstruction is shown through the following Lemma.
\begin{lemma}
If the vector $\Mom_{N}=(\mom_0,\mom_{1/2},\dots,\mom_{(N)/2})$ belongs to the interior of the Nth fractional moment space, then the function $G$, defined in \eqref{SM-eq:Potential_function}, is a continuous function in $\mathbb{R}^N$, and goes to infinity when $||\lambdaVec||\rightarrow +\infty$.
\end{lemma}

\begin{proof}
Let us suppose that the last assertion is wrong, so there exists a sequence $(\lambdaVec^{(n)})_{n=0,1,..}$ such that $||\lambdaVec^{(n)}||\rightarrow +\infty$  when $n\rightarrow +\infty$ and $\sup\limits_n\left\{G(\lambdaVec^{(n)})\right\}<+\infty$ .\\
Hence, there exists   $A\in \mathbb{R}$ such that:
\begin{equation}
G(\lambdaVec^{(n)})=\int_{0}^{1}\exp(-\sum\limits_{i=0}^{N}\lambda^{(n)}_i\size^{i/2})d\size+
\sum\limits_{k=0}^{N}\lambda^{(n)}_{k}\mom_{k/2}<A.
\end{equation}
We write for each $n\in\mathbb{N}$, $\lambdaVec^{(n)}=\lambda^n(\alpha^{(n)}_0,\alpha^{(n)}_{0},\hdots,\alpha^{(n)}_{N})$,
such that $\sum_{i=0}^{N}(\alpha_i^{(n)})^2=1$ and $\lambda^{(n)}\rightarrow+\infty$.\\
Since the sequence $(\tens{\alpha}^{(n)})_{n=0,1..}$ is a bounded sequence, we can extract a convergent subsequence $(\tens{\alpha}^{\phi(n)})_n$, where $\phi:\mathbb{N}\rightarrow\mathbb{N}$ is an increasing function and:
 \begin{equation}
 \lim\limits_{n\rightarrow+\infty}\alpha^{\phi(n)}_i=\alpha_i.
 \end{equation} 
 To simplify the notations, we can directly consider   
that $\alpha^{(n)}_i\rightarrow\alpha_i$ when $n\rightarrow +\infty$.\\
We note by $Q^{(n)}(x)=\sum^{N}_{i=0}(\alpha_i^{(n)}x^{i/2})$ and $Q(x)=\sum_{i=0}^{N}\alpha_ix^{i/2}$.\\
\\
Since the vector $\Mom_N=(\mom_0,\mom_{1/2},..,\mom_{(N)/2})$ is a moment
vector, there exists a non-negative distribution function $f$ such that
$m_{k/2}=\int_0^1s^{k/2}f(s)ds$ for $k=0,\hdots N$, and
\begin{equation}
G(\tens{\lambda}^{(n)})=\int_0^1
\exp(-\lambda^{(n)}Q^{(n)}(s))ds+\int_0^1\lambda^{(n)}Q^{(n)}(s)f(s)ds\leq A.
\label{inequality}
\end{equation}
Since the first integral is positive
  \begin{equation}
  \int_0^1Q^{(n)}(s)f(s)ds\leq \dfrac{A}{\lambda^{(n)}}.
  \end{equation}
When $n$ tends to infinity, we get:
\begin{equation}
\int_0^1Q(s)f(s)ds\leq 0.
\label{ineq-proof1}
\end{equation}
We have $Q \neq 0$, $f\geq0$ and $f\neq0$, and since $Q$ is a continuous function, it follows from the inequality \eqref{ineq-proof1} that there exists $[a,b]\subset [0,1]$ in which $Q(s)\leq -B$ and $B>0$.
Since $Q^{(n)}$ converges uniformly to $Q$ in $[0,1]$, 
then, for all $s \in [a,b]$ and for the large enough values of $n$: 
\begin{equation}
Q^{(n)}(s)<-B/2.
\end{equation}
Using these results in the inequality (\ref{inequality}), as well as $\alpha^{n}_{k}\mom_{k/2}\geq -\mom_0$, we get:
\begin{equation}
\begin{array}{rcl}
A 
&\geq& \int_a^b
(exp(-\lambda^{(n)}Q^{(n)}(s)))ds+ \sum_{i=0}^{N}\lambda^{(n)}_{i}\mom_{i/2},\\[8pt]
&\geq&(b-a)exp(\lambda^{(n)}(\frac{B}{2}))-\lambda^{(n)}N\mom_0,\\
\end{array}
\end{equation}
In the limit when $n$ goes to infinity, we get the contradiction $+\infty\leq A$, thus concluding the proof.
\end{proof}


\subsection{Algorithm of the NDF reconstruction through the Entropy Maximization}
\label{SM:ME-algo}
The reconstruction of the NDF through the maximization of Shannon entropy goes back to finding the Lagrange's multipliers $\lambda_0\hdots\lambda_{N}$ such that:
\begin{equation}
\mom_{k/2}=\int_0^1\size^{k/2}\exp(-(\lambda_0+\sum\limits_{i=1}^{N}\lambda_i\size^{i/2}))dS, \quad k=0,\hdots ,N.
\end{equation}
Solving this nonlinear system is equivalent to optimize the convex function $G(\tens{\lambda})$. 
We solve the problem by using the Newton iterations as proposed in \cite{mead84}. 
The ME reconstruction is then done thanks to the Algorithm~\ref{EM-Algo}.
\begin{algorithm}[h!!]
\caption{ME algorithm}
\begin{algorithmic}
\STATE Choose initial guess of the vector $\lambdaVec$.
  \STATE $\delta_{k/2}\gets \mom_{k/2}-\int_0^1S^{k/2}\exp(-\sum\limits_{i=0}^{N}\lambda_iS^{i/2})dS$
\WHILE{$||\tens{\delta}||>\epsilon \mom_0$}
 \FOR{$i,j< N$}
  \STATE $H_{i,j}\gets \int_0^1S^{(i+j)/2}\exp(-\sum\limits_{i=0}^{N}\lambda_iS^{i/2})dS$
 \ENDFOR
 \STATE $\lambdaVec\gets \lambdaVec-\tens{H}^{-1}\cdot\tens{\delta}$
\FOR{$k<N$}
  \STATE $\delta_{k/2}\gets \mom_{k/2}-\int_0^1S^{k/2}\exp(-\sum\limits_{i=0}^N\lambda_iS^{i/2})dS$
   \ENDFOR
\ENDWHILE
\end{algorithmic}
\label{EM-Algo}
\end{algorithm}

The integral computations are done by using Gauss-Legendre quadrature. 
In \cite{mead84}, it is shown that $24$-point quadrature 
allows to accurately compute 
the different integral expressions involved in the Algorithm \ref{EM-Algo}.

\section{Transport scheme}
\label{app:Transport-scheme}
Here, first and second order schemes are given, which are directly extended from the schemes developed by Kah et al \cite{kah12} for the transport of integer moments.
For that, the canonical moments defined for fractional moments in section~\ref{appendix:fractional_moment_space} are used.

We choose to present the scheme in a two dimensional space to lighten the notations and we note $\U=(u,v)$. For the transport resolution in physical space, we use a dimensional splitting algorithm. In this context we consider a free transport in one direction (we present the $x$-direction here) 
of the droplets without the evaporation nor the drag force. 
\begin{equation}
 \begin{array}{r@{}lr}
\partial_t\mom_{k/2}+\partial_{x}(\mom_{k/2}u)&=0, &k=0,\hdots,3,\\
\partial_t(\mom_{1}\U)+\partial_{x}(\mom_{1}u\,\U)&=0, &\\
\end{array}
\label{eq:free_transport}
\end{equation}

The mathematical structure of the pressureless gas system leads to some singularities (known as $\delta$-shocks). These singularities occur when the monokinetic assumption is violated. This can happen when trajectory crossings take place and lead to particles accumulation in a very small volume. Following the idea of de Chaisemartin \cite{deChaisemartin2009}, Kah et al \cite{kah12}  developed a finite volume kinetic scheme for the EMSM model. We use the same approach to solve numerically the system \eqref{eq:free_transport}. In the following, we present briefly the main steps to derive the kinetic scheme for the system \eqref{eq:free_transport}:
\begin{enumerate}
\item We write the equivalent kinetic system to the pressureless system \eqref{eq:free_transport}, as it was proposed in \cite{bouchut2003}:
\begin{equation}
\left\{
 \begin{array}{c}
 \partial_tf+\partial_x(c_xf)=0\,\, \text{, and}\\
 f(t,x,\C,\size)=n(t,x,\size)\delta(\C-\U),
 \end{array}\right.
 \label{free_kinetic_transport}
\end{equation}
\item We use the finite volume discretization of the system \eqref{eq:free_transport}:
\begin{equation}
\begin{array}{rcl}
\Mom_i^{n+1}&=& \Mom_i^{n}-\dfrac{\deltat}{\Deltax}(F_{i+1/2}-F_{i-1/2}),\\[8pt]
\tens{p}_i^{n+1}&=&\tens{p}_i^{n}-\dfrac{\deltat}{\Deltax}(G_{i+1/2}-G_{i-1/2}),
\end{array}
\end{equation}
where the fluxes are expressed as function of the NDF: 
\begin{equation*}
\begin{pmatrix}
F^{\pm}_{i+1/2}\\
G^{\pm}_{i+1/2}
\end{pmatrix}
=\dfrac{1}{\deltat}\int_{t_n}^{t_{n+1}}\int_{0}^1\int_{\mathbb{R^2}}\begin{pmatrix}
1\\
\size^{1/2}\\
\size\\
\size^{3/2}\\
\size u\\
\size v
\end{pmatrix}f(t,x_{i+1/2},\C,\size)d\C d\size dt,
\end{equation*}
\item We split the fluxes in two integral parts: $F_{i+1/2} = F^+_{i+1/2}+F^-_{i+1/2}$ and $G_{i+1/2}=G_{i+1/2}^++G_{i+1/2}^-$.
The first (resp. the second) corresponds to the droplet of positive (resp. negative) velocity in $x$-direction. 
Then, we use the exact solution of the kinetic system \eqref{free_kinetic_transport}, to express the fluxes as function of the NDF at $t=t_n$.
\item Finally, the fluxes are expressed as function of a spatial reconstruction of the moments and velocities at $t=t_n$:
\begin{equation}
\begin{pmatrix}
F^{\pm}_{i+1/2}\\
G^{\pm}_{i+1/2}
\end{pmatrix}
=\dfrac{1}{\deltat}\int_{x_{i-1/2}}^{x_{i+1/2}}\begin{pmatrix}
\mom_0(t_n,x)\\
\mom_{1/2}(t_n,x)\\
\mom_1(t_n,x)\\
\mom_{3/2}(t_n,x)\\
\mom_1u(t_n,x)\\
\mom_1v(t_n,x)
\end{pmatrix}\mathds{1}_{\Sigma^{\pm}}(x)dx,
\end{equation}
where $\Sigma^{\pm}=\left\{x^{'},\pm(x_{i+1/2}-\deltat u(t_n,x^{'}))<\pm x^{'}\right\}$ 
\end{enumerate}
Here, we present two numerical kinetic schemes of first and second order, which follow this strategy.

\subsection{First order scheme}
For a first order scheme, we consider a constant piecewise reconstruction for the moments and the velocity. Then the fluxes become:
\begin{equation}
\begin{pmatrix}F_{i+1/2}\\
G_{i+1/2}\end{pmatrix}=\begin{pmatrix}
m^n_{0,i}\\
m^n_{1/2,i}\\
m^n_{1,i}\\
m^n_{3/2,i}\\
m^n_{1,i}u^n_{0,i}\\
m^n_{1,i}v^n_{i}\\
\end{pmatrix}max(u_{i}^n,0)+\begin{pmatrix}
m^n_{0,i+1}\\
m^n_{1/2,i+1}\\
m^n_{1,i+1}\\
m^n_{3/2,i+1}\\
m^n_{1,i+1}u^n_{0,i+1}\\
m^n_{1,i}v^n_{i+1}\\
\end{pmatrix}min(u_{i+1}^n,0),
\label{moment_flux_o1}
\end{equation}


\subsection{Second order scheme}
Kah et al \cite{kah12} developed a realizable second order kinetic scheme. They showed that the canonical moments (in the context of integer moments) are transported variables. Therefore, these quantities satisfy a maximum principle. Since, the canonical moments live in the simple space $[0,1]^N$ ($N=3$ in our case). The authors proposed to use linear reconstruction of the canonical moments to design a high order scheme, instead of reconstructing directly the moments, which belong to a complex space. We adopt the same approach with some adaptations for the fractional moments. After reconstructing the variables (velocity and canonical moments) the fluxes are computed by a simple integration.
\subsubsection{Reconstruction}
 The reconstructed variables are the moment $\mom_0$, the canonical moments (defined in part \ref{appendix:fractional_moment_space}) $p_1, p_2, p_3$ and the velocity.
 \begin{equation}
 \left\{
 \begin{array}{rcl}
 \mom_0(x)&=&\mom_{0,i}+ Dm_{0,i}(x-x_i),\\
 p_1(x)&=&\overline{p_{1,i}}+ Dp_{1,i}(x-x_i),\\
 p_2(x)&=&\overline{p_{2,i}}+ Dp_{2,i}(x-x_i),\\
 p_3(x)&=&\overline{p_{3,i}}+ Dp_{3,i}(x-x_i),\\
 u(x) &=& \overline{u_i}+Du_{i}(x-x_i),\\
 v(x) &=& \overline{v_i}+Dv_{i}(x-x_i),\\
 \end{array}\right.
 \end{equation}
 where $x\in [x_{i-1/2},x_{i+1/2}]$. Generally the quantities with the bar are different from the cell averaged quantities $p_{k,i}$, $u_{k,i}$ and $v_{k,i}$ and they are determined depending on the slopes and the following conservation properties:
 
 \begin{equation}
  \begin{array}{rcl}
  \mom_{1/2,i}^n&=&\dfrac{1}{\Deltax}\int_{x_{i-1/2}}^{x_{i+1/2}}\mom_0(x)p_1(x)dx,\\[8pt]
  \mom_{1,i}^n &=& \dfrac{1}{\Deltax} \int_{x_{i-1/2}}^{x_{i+1/2}}\mom_0(x)p_1(x)[(1-p_1)p_2+p_1](x)dx,\\[8pt]
  \mom_{3/2,i}^n&=&\dfrac{1}{\Deltax} \int_{x_{i-1/2}}^{x_{i+1/2}}\mom_0p_1\left\{(1-p_1)(1-p_2)p_2p_3+[(1-p_1)p_2+p_1]^2\right\}(x)dx,\\[8pt]
  m^n_{1,i}u_i^n&=&\dfrac{1}{\Deltax}\int_{x_{i-1/2}}^{x_{i+1/2}}\mom_0(x)p_1(x)[(1-p_1)p_2+p_1](x)u(x)dx.
  \end{array}
  \label{eq:conserv-property}
 \end{equation}
 Compared to the expressions developed in the case of the EMSM model, only the last integral expression is different. In fact, the velocity is weighted with the moment $\mom_1$ for both models, but in the new model, $\mom_1$ acts as a second order moment. For this reason, the expression of the moment $\mom_1$ as function of the canonical moment is different from the one in the case of integer moments.\\
 
 Kah el al. \cite{kah12} show that the bar terms can be written as follows:
 \begin{equation}
 \begin{array}{rcl}
 \overline{p_{k,i}}&=&a_{k,i}+b_{k,i}Dp_{k,i},\\
  \overline{u_{i}}&=&a_{u,i}+b_{u,i}Du_i,\\
 \end{array}
 \label{eq:bar-simplif}
 \end{equation}
where for each $k$, $a_{k,i}$ and $b_{k,i}$ are independent of $Dp_{k,i}$  , and $a_{u,i}$ and $b_{u,i}$ are independent of $Du_i$.\\

 \subsubsection{Slope limitation}
 In order to satisfy the maximum principle for the transported quantities (the canonical moment and the velocity) and the positivity of the number density $\mom_0$, the slopes should be calculated carefully. Following the development done in \cite{kah12}, the slopes are calculated as follows:
 \begin{equation}
\begin{array}{ll}
 Dm_{0,i}&=\phi(\mom_{0,i-1}^n,\mom_{0,i}^n,\mom_{0,i}^n)\min\left(\dfrac{|\mom_{0,i+1}^n-\mom_{0,i}^n|}{\Deltax},\dfrac{|\mom_{0,i}^n-\mom_{0,i-1}^n|}{\Deltax},\dfrac{2\mom_{0,i}^n}{\Deltax}\right),\\
Dp_{k,i}&=\phi(p_{k,i-1}^n,p_{k,i}^n,p_{k,i+1}^n)\min\left(\dfrac{|p_{k,i+1}^n-a_{k,i}|}{\Deltax+2b_{k,i}},\dfrac{|a_{k,i}-p_{k,i-1}^n|}{\Deltax-2b_{k,i}}\right),\\
Du_{i}&=\phi(u_{i-1}^n,u_{i}^n,u_{i+1}^n)\min\left(\dfrac{|u_{i+1}^n-u_{i}^n|}{\Deltax+2b_{u,i}},\dfrac{|u_{i}^n-u_{i-1}^n|}{\Deltax-2b_{u,i}},\dfrac{1}{\deltat}\right),
\end{array}
 \label{eq:solpes-relations}
 \end{equation}
 where $\phi(a,b,c)=1/2(sgn(b-a)+sgn(c-b))$.\\
 
 Using the equations \eqref{eq:conserv-property},\eqref{eq:bar-simplif} and \eqref{eq:solpes-relations} the slopes and the bar variables can be expressed as functions of the current and neighbor cell variables. However, these algebra relations are quite heavy. Therefore, their calculation is achieved using Maple software.\\ 
 \subsubsection{Fluxes Computation}
After computing the slopes and the bar variables, the fluxes can be computed as follows:
 \begin{equation}
\begin{pmatrix}
F^+_{i+1/2}\\
G^+_{i+1/2}
\end{pmatrix}
=\dfrac{1}{\deltat}\int_{x^L_{i+1/2}}^{x_{i+1/2}}\mom_0\begin{pmatrix}
1\\
p_1\\
p_1[(1-p_1)p_2+p_1]\\
p_1\left\{(1-p_1)(1-p_2)p_2p_3+[(1-p_1)p_2+p_1]^2\right\}\\
p_1[(1-p_1)p_2+p_1]u\\
p_1[(1-p_1)p_2+p_1]v
\end{pmatrix}dx,
\end{equation}
and
\begin{equation}
\begin{pmatrix}
F^-_{i+1/2}\\
G^-_{i+1/2}
\end{pmatrix}
=-\dfrac{1}{\deltat}\int_{x_{i+1/2}}^{x^R_{i+1/2}}\mom_0\begin{pmatrix}
1\\
p_1\\
p_1[(1-p_1)p_2+p_1]\\
p_1\left\{(1-p_1)(1-p_2)p_2p_3+[(1-p_1)p_2+p_1]^2\right\}\\
p_1[(1-p_1)p_2+p_1]u\\
p_1[(1-p_1)p_2+p_1]v
\end{pmatrix}dx,
\end{equation}

such that
\begin{equation}
\begin{array}{rcl}
x_{i+1/2}^L&=&x_{i+1/2}-\deltat\dfrac{(\bar{u}_i+\frac{\Deltax}{2}Du_i)_+}{1+\deltat Du_i},\\[8pt]
x_{i+1/2}^R&=&x_{i+1/2}-\deltat\dfrac{(\bar{u}_{i+1}-\frac{\Deltax}{2}Du_{i+1})_+}{1+\deltat Du_{i+1}}.\\
\end{array}
\end{equation}
The expressions inside the integrals are polynomial functions of $x$ of order up to $6$, their calculation can be achieved by using four points of the Gauss-Legendre quadrature.

\bibliographystyle{siamplain}
\bibliography{biblio}
\end{document}